\newtheorem{theorem}{Theorem}[section]
\newtheorem{lemma}[theorem]{Lemma}
\newtheorem{cor}[theorem]{Corollary}
\newtheorem{thm}[theorem]{Theorem}
\newtheorem{predefinition}[theorem]{{\bf Definition}}
\newenvironment{definition}{\begin{predefinition}\rm{\hspace{-0.5 em}{\bf}}}{\end{predefinition}}
\newtheorem{prequestion}[theorem]{{\bf Question}}
\DeclareMathAlphabet{\mathbbmsl}{U}{bbm}{m}{sl}
\DeclareMathAlphabet{\mathpzc}{OT1}{pzc}{m}{it}
\DeclarePairedDelimiter\floor{\lfloor}{\rfloor}
\DeclarePairedDelimiter\ceil{\lceil}{\rceil}
\title{Percolating sets in  bootstrap percolation   on  the Hamming graphs}
\author{
M.R.  Bidgoli$^{^1}$  \quad  A. Mohammadian$^{^2}$  \quad   B. Tayfeh-Rezaie$^{^1}$\\[.3cm]
$^{^1}$School of Mathematics,\\
Institute for Research in Fundamental Sciences\,(IPM),\\
P.O. Box 19395-5746, Tehran, Iran\\[.1cm]
$^{^2}$School of Mathematical Sciences, Anhui University,\\
Hefei 230601, Anhui,  China\\[.2cm]
\textsf{\{bd,  ali\_m,  tayfeh-r\}}@\textsf{ipm.ir}}
\date{}
\begin{document}

\maketitle

\begin{abstract}
\noindent For any    integer $r\geqslant0$, the $r$-neighbor bootstrap percolation on a graph     is an  activation process of the  vertices. The process starts with   some   initially activated vertices  and  then, in each round,    any inactive vertex with at least $r$ active neighbors becomes activated.
A  set of  initially activated vertices  leading   to the activation of all vertices     is said to be a  percolating set.
Denote the  minimum  size of a  percolating set in the  $r$-neighbor bootstrap percolation process  on a graph $G$  by $m(G, r)$.  In this paper, we present upper and  lower bounds
on   $m(K_n^d, r)$, where $K_n^d$  is   the Cartesian product of $d$ copies of the complete graph $K_n$ which is referred   as     the  Hamming graph.
Among other results, we show  that $m(K_n^d, r)=\frac{1+o(1)}{(d+1)!}r^d$ when  both  $r$ and $d$  go to infinity with $r<n$ and    $d=o(\!\sqrt{r})$. \\

\noindent {\bf Keywords:}    Bootstrap percolation,  Hamming graph,  Percolating set. \\

\noindent {\bf AMS Mathematics Subject Classification\,(2010):}    05C35, 60K35.
\end{abstract}

\section{Introduction}

Bootstrap percolation process on graphs    can be interpreted as  a  cellular automaton,    a concept  was  introduced    by von Neumann \cite{von}.
It   has been extensively investigated    in several diverse fields such as  combinatorics,  probability theory, statistical physics  and social sciences.
The    $r$-neighbor model  is the most studied version  of  this process in the literature. It  was  introduced  in 1979  by Chalupa, Leith  and Reich \cite{cha}.
In  the    $r$-neighbor bootstrap percolation  process on a graph,  first some vertices are initially activated and then,  in each phase, any  inactive vertex with at least $r$ active neighbors becomes activated.   Once a vertex becomes activated, it remains  active forever.
This  process has  also been treated  in the literature under  other    names like  irreversible threshold, influence propagation   and dynamic monopoly.

Throughout   this  paper, all graphs    are assumed to be finite,    undirected,     without loops and  multiple edges.
For   a graph $G$, we denote  the vertex set and the edge set of   $G$  by $V(G)$ and $E(G)$, respectively. For a vertex     $v$ of $G$,  we set   $N(v)=\{x\in V(G) \, | \, x \text{   is adjacent to } v\}$.
The {\sl degree} of $v$ is defined  to be  $|N(v)|$.
Given a nonnegative    integer $r$ and a graph $G$, the  $r$-{\sl neighbor bootstrap percolation process} on $G$  begins with a subset  $A_0$ of $V(G)$ whose elements are     initially activated   and then,   at    step $i$ of the process,    the set $A_i$ of active  vertices    is $$A_i=A_{i-1}\cup\Big\{v\in V(G) \, \Big| \,  |N(v)\cap A_{i-1}|\geqslant r\Big\}$$ for any   $i\geqslant1$.
We say   $A_0$    is a {\sl percolating set} of $G$   if $\bigcup_{i\geqslant0} A_i=V(G)$.
The main extremal problem here  is to determine the minimum size of a percolating set which  is denoted by $m(G,r)$.
The size of percolating sets has been studied for      various families of  graphs such as  hypercubes \cite{mor},   grids \cite{pete,ham},  tori \cite{ham},     trees \cite{rie} and random graphs \cite{fei,janson}.

Let us fix some notation and terminology. The {\sl Cartesian product} of two graphs $G$ and $H$, denoted by $G\square H$, is the graph with vertex set $V(G)\times V(H)$  in which two vertices $(g_1, h_1)$ and $(g_2, h_2)$ are adjacent if and only if either $g_1=g_2$ and $h_1$ is adjacent to $h_2$  or $h_1=h_2$ and $g_1$ is adjacent to $g_2$. We denote the    complete graph on $n$ vertices by  $K_n$ and we  consider   $[\![n]\!]=\{0, 1, \ldots, n-1\}$ as the vertex set of $K_n$.
Denote by  $K_n^d$     the  Cartesian product of $d$ vertex disjoint  copies of $K_n$, that  is, the   Hamming graph of dimension $d$.

In this paper, we present upper and  lower bounds on   $m(K_n^d, r)$. In particular, we establish  that $m(K_n^d, r)=\frac{1+o(1)}{(d+1)!}r^d$ when  both  $r$ and $d$  go to infinity with $r<n$ and    $d=o(\!\sqrt{r})$.  It is worth to mention  that   a random version of    the   $r$-neighbor bootstrap percolation  process on the   Hamming graphs has been investigated  in \cite{gra}.

\section{Two-dimensional Hamming   graphs}\label{sec2}

For every  integers  $n\geqslant1$ and $r\geqslant0$, it is clear  that $m(K_n, r)=\min\{n, r\}$.
In this section, we deal with the first nontrivial case, that is,  the Hamming graph of dimension $2$. We     derive an exact formula for $m(K_n^2,r)$. If $n\leqslant\ceil{r/2}$, then the degree of  any vertex of  $K_n^2$ is at most  $r-1$,  implying that   $m(K_n^2, r)=n^2$. The following theorem resolves the remaining cases.

\begin{theorem}\label{dim2}
For every  nonnegative  integers $n$ and $r$ with   $n\geqslant\ceil{r/2}+1$,
$$m\!\left(K_n^2, r\right)=\left\lfloor\frac{(r+1)^2}{4}\right\rfloor.$$
\end{theorem}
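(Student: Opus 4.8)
Write $p=\floor{(r+1)/2}$ and $q=\ceil{(r+1)/2}$, so that $p+q=r+1$ and $pq=\floor{(r+1)^2/4}$. I would prove the two bounds separately, and I expect the construction giving the upper bound to be the real obstacle, while the lower bound follows cleanly from a linear-algebraic (tensor) argument.

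For the lower bound, the plan is to attach to each vertex $(i,j)$ a vector and to check that the activation rule cannot enlarge the span. Concretely, I would pick vectors $x_0,\dots,x_{n-1}\in\mathbb{R}^p$ in general position (any $p$ of them independent, e.g.\ a Vandermonde family) and $y_0,\dots,y_{n-1}\in\mathbb{R}^q$ likewise, and set $v_{ij}=x_i\otimes y_j\in\mathbb{R}^p\otimes\mathbb{R}^q$. The key observation is that if $(i,j)$ acquires at least $r$ active neighbours, $|J|$ of them in its row and $|I|$ in its column, then $|I|+|J|\ge r=p+q-1$ forces $|I|\ge p$ or $|J|\ge q$; in the first case $\{x_{i'}:i'\in I\}$ spans $\mathbb{R}^p$, so $\mathbb{R}^p\otimes y_j$—and in particular $v_{ij}$—lies in the span of the already-active vectors, and the other case is symmetric. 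Hence the span of the active vectors is invariant under the process. Since $A_0$ percolates, every $v_{ij}$ ends up in the span of $A_0$; as the $v_{ij}$ together span the whole $pq$-dimensional space $\mathbb{R}^p\otimes\mathbb{R}^q$ (here I use $n\ge q\ge p$, which is exactly guaranteed by $n\ge\ceil{r/2}+1$), this yields $|A_0|\ge pq$.

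For the upper bound I would argue in two phases. First a growth lemma: the triangle $T=\{(i,j):i+j\le r-1\}$ percolates, because every cell on the antidiagonal $i+j=s$ has exactly $s$ active neighbours inside $\{i+j\le s-1\}$, so once a solid triangle reaches depth $r$ it advances one antidiagonal at a time and fills the whole board. Second, rather than seeding all of $T$ (which has $\binom{r+1}{2}$ cells, about twice too many), I would seed only a sparse ``every other cell'' pattern—roughly the cells of $T$ with $i+j$ even, adjusted by a few extra antidiagonal cells when $r$ is even—of total size exactly $pq$, and check that the remaining cells of $T$ activate in a cascade, after which the growth lemma finishes the job.

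The step I expect to be the main obstacle is precisely this verification of the construction. One must confirm that the sparse seed of size exactly $pq$ really does fill $T$, and the parity bookkeeping is genuinely different for even and odd $r$, since for even $r$ the cells nearest the hypotenuse fall one neighbour short and need reinforcement. Separately one must handle the regime $\ceil{r/2}+1\le n\le r$, where $T$ no longer fits inside the board: here I would instead exhibit $A_0$ as the complement of a carefully chosen set of $n^2-pq$ vertices, placed so that each omitted vertex has all but fewer than $r$ of its row/column neighbours present, and then show the omitted vertices fill in (the diagonal-omission construction for $r=4$, $n=3$ is the prototype). Matching the lower bound $pq$ exactly across all of these cases is where the bulk of the bookkeeping lies.
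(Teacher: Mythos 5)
Your lower bound argument is correct, complete, and genuinely different from the paper's. The paper proves the lower bound by induction on $r$: relabelling so that the first vertex activated outside the percolating set $A$ is $(n-1,n-1)$, all $r$ of its active neighbours must lie in $A\cap L$ where $L=\{x=n-1\}\cup\{y=n-1\}$, and deleting $L$ shows $A\cap[\![n-1]\!]^2$ percolates in $K_{n-1}^2$ with threshold $r-2$, giving $|A|\geqslant r+\lfloor (r-1)^2/4\rfloor$. Your tensor argument replaces this with a one-shot rank bound: since $r=p+q-1$, any newly activated vertex has $|I|\geqslant p$ column-neighbours or $|J|\geqslant q$ row-neighbours active, so $v_{ij}=x_i\otimes y_j$ stays in the span of the initially active vectors, and $n\geqslant\lceil r/2\rceil+1\geqslant q$ guarantees the $v_{ij}$ span all of $\mathbb{R}^p\otimes\mathbb{R}^q$. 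This is airtight and is in the spirit of the polynomial method the paper only deploys later (Section 3) for the weaker edge-percolation bound; applied directly to vertices it gives the exact constant, which is a nice payoff.

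The upper bound, however, has a genuine gap, and it sits exactly where you flagged it. Your triangle $T=\{(i,j): i+j\leqslant r-1\}$ contains $(r-1,0)$, so it fits inside $[\![n]\!]^2$ only when $n\geqslant r$; for $\lceil r/2\rceil+1\leqslant n<r$ you offer no construction beyond the $r=4$, $n=3$ prototype, and this regime is part of the theorem. (A secondary issue: your stated sparse pattern is slightly off. The pattern that works is the set of cells of $T$ with $i+j\equiv r-1\pmod 2$, i.e.\ \emph{odd} antidiagonals when $r$ is even; it has exactly $\lfloor (r+1)^2/4\rfloor$ cells, and processing the non-seeded antidiagonals of $T$ in increasing order, each such cell has exactly $i+(r-s)/2$ active column-neighbours and $j+(r-s)/2$ active row-neighbours, totalling $r$, so the cascade does close — no ``reinforcement'' cells are needed.) The paper avoids the small-$n$ problem entirely by using a different percolating set $\mathcal{V}_{n,r}$: two \emph{solid} triangles of depths $\lceil r/2\rceil$ and $\lfloor r/2\rfloor$ anchored at the opposite corners $(0,n-1)$ and $(n-1,0)$, which are disjoint, have total size $\binom{\lceil r/2\rceil+1}{2}+\binom{\lfloor r/2\rfloor+1}{2}=\lfloor (r+1)^2/4\rfloor$, and fit in the board for every $n\geqslant\lceil r/2\rceil+1$; percolation is then proved by induction on $r$, activating the lines $x=n-1$ and $y=n-1$ first and reducing to $\mathcal{V}_{n-1,r-2}$ inside $K_{n-1}^2$. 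To finish your proof you would need either an explicit construction for $\lceil r/2\rceil+1\leqslant n<r$ (interpolating between your sparse triangle and the independent-set complement at $n=\lceil r/2\rceil+1$ is not routine) or to switch to a corner-anchored construction like the paper's, which works uniformly.
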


\begin{proof}
Let $$\mathcal{V}_{n, r}=\left\{(x, y)\in[\![n]\!]^2  \,    \left| \, x+(n-1-y)<\left\lceil\frac{r}{2}\right\rceil  \,    \text{ or }  \,   (n-1-x)+y<\left\lfloor\frac{r}{2}\right\rfloor\right.\right\}.$$
As  an  example,   $\mathcal{V}_{6, 5}$ is shown in Figure  \ref{f1f}.
It is well known that the number of  solutions of   $x_1+\cdots+x_k<m$ for the  nonnegative integers $x_1, \ldots, x_k$  is ${{m+k-1}\choose{k}}$. As $n\geqslant\ceil{r/2}+1$, we have
$$|\mathcal{V}_{n, r}|={{\left\lceil\tfrac{r}{2}\right\rceil+1}\choose{2}}+{{\left\lfloor\tfrac{r}{2}\right\rfloor+1}\choose{2}}=\left\lfloor\frac{(r+1)^2}{4}\right\rfloor.$$
Note that $\mathcal{V}_{n, r}\cap[\![n-1]\!]^2=\mathcal{V}_{n-1, r-2}$.
We  prove   by induction on $r$ that  $\mathcal{V}_{n, r}$  is a percolating set  in the $r$-neighbor bootstrap percolation  process  on $K_n^2$.   The statement is trivial for $r=0, 1$.
Let  $r\geqslant2$ and assume that   the vertices in $\mathcal{V}_{n, r}$ are  initially activated. The points on the lines $x=n-1$ and $y=n-1$  become activated   from top to bottom and from  right to left,   respectively.  Remove from $K_n^2$ all the vertices in the set
$$L=\left\{(x,y)\in[\![n]\!]^2 \,  \Big| \, x=n-1 \,    \text{ or } \,  y=n-1\right\}$$ to get $K_{n-1}^2$.
By   the induction hypothesis,   $\mathcal{V}_{n-1, r-2}=\mathcal{V}_{n, r}\cap[\![n-1]\!]^2$ is a percolating  set of  $K_{n-1}^2$   in the $(r-2)$-neighbor bootstrap percolation process.
Since each vertex in  $[\![n-1]\!]^2$  has two additional activated  neighbors in $L$,  we conclude  that $\mathcal{V}_{n-1, r-2}\cup L$ is a percolating  set of  $K_n^2$
in the $r$-neighbor bootstrap percolation process. This proves the assertion.

We next use induction on $r$ to establish  that any  percolating set of     $K_n^2$ in the $r$-neighbor bootstrap percolation  process  has  at least
$\floor{(r+1)^2/4}$ elements.     The statement is trivially true  for $r=0, 1$.
Let  $r\geqslant2$ and consider a percolating set $A$  in the  $r$-neighbor bootstrap percolation  process on $K_n^2$.
Without   loss of generality,  one may assume that $(n-1, n-1)$  is the first vertex  in $[\![n]\!]^2\setminus A$  that becomes activated. So,    $(n-1, n-1)$  must have  at least $r$ initially activated neighbors in $L$, meaning that $|A\cap L|\geqslant r$. Remove  from $K_n^2$ all vertices  in $L$ to get  $K_{n-1}^2$. Since  $A\cup L$ is a percolating set  in the  $r$-neighbor bootstrap percolation  process  on $K_n^2$ and each vertex  in $[\![n-1]\!]^2$ has exactly two neighbors in $L$, we deduce  that $A\cap[\![n-1]\!]^2$ is a percolating set   of  $K_{n-1}^2$ in the $(r-2)$-neighbor bootstrap percolation  process. It follows from  the induction hypothesis
that $|A\cap[\![n-1]\!]^2|\geqslant\floor{(r-1)^2/4}$. Therefore,
\begin{equation*}|A|\geqslant|A\cap L|+\Big|A\cap[\![n-1]\!]^2\Big|\geqslant r+\left\lfloor\frac{(r-1)^2}{4}\right\rfloor=\left\lfloor\frac{(r+1)^2}{4}\right\rfloor.\qedhere\end{equation*}
\end{proof}
\begin{figure}[h]
\centering
\includegraphics[width=0.3\textwidth]{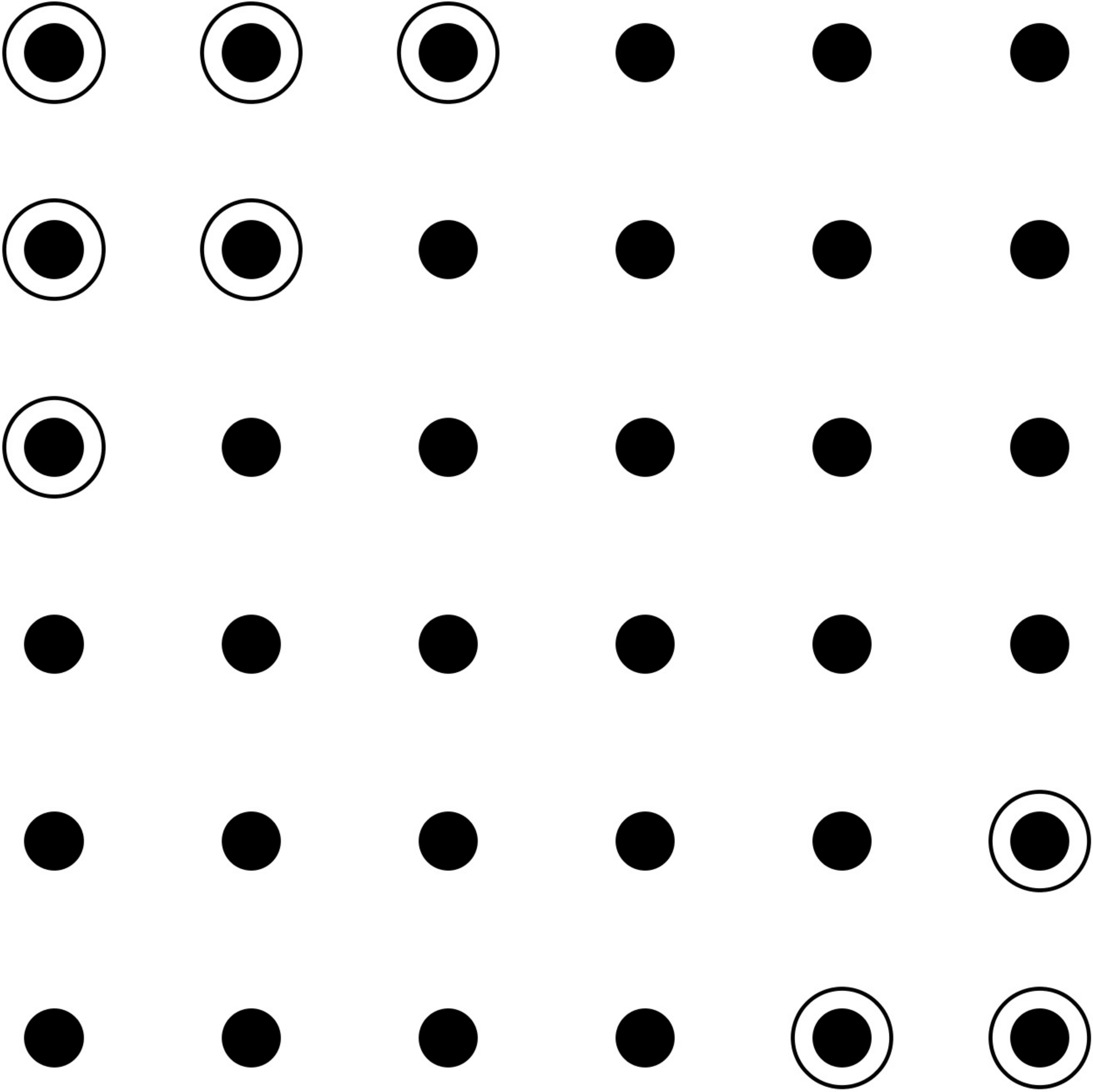}
\caption{The set $\mathcal{V}_{6, 5}$ is  outlined with   circles  drawn around  its elements.}\label{f1f}
\end{figure}


\section{Polynomial method}

Closely related to the $r$-neighbor bootstrap percolation    is the notion of graph bootstrap percolation which was introduced by Bollob\'as in 1968 under the name of `weak saturation' \cite{bol} and was later studied
in 2012 by  Balogh,     Bollob\'as     and   Morris \cite{bal}. We recall  the formal definition.
Given two  graphs $G$ and $H$, the  $H$-{\sl bootstrap percolation process} on $G$  begins with a   subset  $E_0$ of $E(G)$ whose elements are initially activated   and then,   at   step $i$ of the process,    the set  of activated edges   is
$$E_i=E_{i-1}\cup\left\{
e\in E(G) \, \left| \,
\begin{array}{ll}
    \text{There exists  a subgraph $H_e$ of $G$ such} \\
    \text{that $H_e$ is isomorphic to $H$, $e\in E(H_e)$} \\
    \text{and $E(H_e)\setminus\{e\}\subseteq E_{i-1}$.}
   \end{array}
    \right.
\right\}$$
for any   $i\geqslant1$.
The  set $E_0$  is called    a {\sl percolating set} of  $G$  provided   $\bigcup_{i\geqslant0}E_i=E(G)$.  The minimum size of a percolating  set in the $H$-bootstrap percolation  process  on $G$ is said to be  the {\sl weak saturation number} of $H$ in $G$ and is denoted by $\text{\sl wsat}(G, H)$. For simplicity  and following  \cite{ham}, we let  $m_e(G, r)=\text{\sl wsat}(G,S_{r+1})$,  where $S_{r+1}$ is the star graph   on  $r+2$ vertices. It is easy to verify   that $m_e(G,r)\leqslant rm(G,r)$.   Using this inequality and  by  computing $m_e(K_n^d, r)$ in the current  section,  we will present    a lower bound on $m(K_n^d, r)$  in the next  section.  We first recall the following definition from  \cite{ham}.

\begin{definition}\label{defhh}
Let $r$ be a positive  integer and   let   $G$  be a graph equipped  with a  proper edge coloring   $c: E(G)\longrightarrow\mathbbmsl{R}$. Let $W_c(G, r)$ be the vector space over $\mathbbmsl{R}$ consisting of all
functions $\phi : E(G)\longrightarrow\mathbbmsl{R}$ for   which there exist  polynomials $\{P_v(x)\}_{v\in V(G)}$ satisfying
\begin{itemize}
\item[{\rm (i)}] $\deg P_v(x)\leqslant r-1$ for any vertex  $v\in V(G)$;
\item[{\rm (ii)}]  $P_u(c(uv))=P_v(c(uv))=\phi(uv)$ for each edge $uv\in E(G)$.
\end{itemize}
It is said that the polynomials  $\big\{P_v(x)\big\}_{v\in V(G)}$ {\sl recognize} $\phi$.
\end{definition}

The following theorem  provides an interesting  linear algebraic lower bound on $m_e(G,r)$.

\begin{theorem}[Hambardzumyan,   Hatami,     Qian \cite{ham}]\label{hh}
Let  $r$ be a positive  integer  and let $c : E(G) \longrightarrow\mathbbmsl{R}$ be a proper edge coloring of a graph $G$. Then $m_e(G, r)\geqslant\dim W_c(G, r)$.
\end{theorem}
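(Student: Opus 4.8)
The plan is to carry out the standard dual linear-algebra argument for weak saturation lower bounds, with the polynomial structure of $W_c(G, r)$ supplying the required linear dependencies through Lagrange interpolation. I would view $W_c(G, r)$ as a subspace of the coordinate space $\mathbbmsl{R}^{E(G)}$ and, for each edge $e$, introduce the evaluation functional $\delta_e \in W_c(G, r)^*$ given by $\delta_e(\phi) = \phi(e)$. Since a function in $W_c(G, r)$ that vanishes on every edge is identically zero, the family $\{\delta_e \mid e \in E(G)\}$ separates the points of $W_c(G, r)$ and hence spans the dual space $W_c(G, r)^*$.

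The crux is a local dependency that arises whenever an edge gets activated. Suppose that in the $S_{r+1}$-bootstrap percolation process an edge $e = uv$ completes a copy of $S_{r+1}$ whose remaining $r$ edges $f_1, \ldots, f_r$ are already active; all of $e, f_1, \ldots, f_r$ meet at the center of the star, say the vertex $u$. First I would note that, as $c$ is a proper edge coloring, the colors $c(f_1), \ldots, c(f_r)$ are pairwise distinct. Now for any $\phi \in W_c(G, r)$ recognized by polynomials $\{P_w\}_{w \in V(G)}$, property (ii) gives $\phi(f_i) = P_u(c(f_i))$ for each $i$ and $\phi(e) = P_u(c(e))$. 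Because $\deg P_u \leqslant r - 1$ by property (i), the polynomial $P_u$ is determined by its values at the $r$ distinct nodes $c(f_1), \ldots, c(f_r)$, and Lagrange interpolation writes $P_u(c(e))$ as a linear combination $\sum_{i=1}^r \lambda_i P_u(c(f_i))$ whose coefficients $\lambda_i$ depend only on the colors, not on $\phi$. Consequently $\delta_e = \sum_{i=1}^r \lambda_i \delta_{f_i}$ as functionals on $W_c(G, r)$.

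With this dependency established, I would finish by tracking the span of the active functionals through the process. Let $E_0$ be any percolating set and put $U = \mathrm{span}\{\delta_f \mid f \in E_0\} \subseteq W_c(G, r)^*$. The previous step shows that each newly activated edge $e$ has $\delta_e$ lying in the span of the functionals of the edges active just before it, so by induction on the percolation steps every active edge $e$ satisfies $\delta_e \in U$. As $E_0$ percolates, all edges are eventually active, whence $\{\delta_e \mid e \in E(G)\} \subseteq U$; together with the spanning property from the first paragraph this forces $U = W_c(G, r)^*$. Therefore $|E_0| \geqslant \dim U = \dim W_c(G, r)$, and minimizing over percolating sets yields $m_e(G, r) \geqslant \dim W_c(G, r)$.

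The step I expect to be the main obstacle is isolating the interpolation dependency correctly: one must see that the degree bound $\deg P_v \leqslant r - 1$ together with the properness of $c$ produces exactly $r$ interpolation nodes at the star's center, so that the value on the missing edge is a fixed linear function of the values on the other $r$ edges. Once this local relation is in place, the global invariance of the span $U$ is routine and the bound follows immediately.
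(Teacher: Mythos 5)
The paper does not prove this statement at all; it is quoted from Hambardzumyan--Hatami--Qian \cite{ham} as a black box, so there is no in-paper proof to compare against. Your argument is correct and complete: the evaluation functionals $\delta_e$ span $W_c(G,r)^*$ because $W_c(G,r)$ sits inside $\mathbbmsl{R}^{E(G)}$; the properness of $c$ makes the $r$ colors at the star's center distinct interpolation nodes, so the degree bound $\deg P_u\leqslant r-1$ forces $\delta_e=\sum_{i=1}^r\lambda_i\delta_{f_i}$ with color-dependent coefficients; and the induction along the percolation order then traps all of $W_c(G,r)^*$ in the span of $\{\delta_f \mid f\in E_0\}$, giving $|E_0|\geqslant\dim W_c(G,r)$. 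For comparison, the argument in \cite{ham} is the ``primal'' form of yours: assume $|E_0|<\dim W_c(G,r)$, pick a nonzero $\phi\in W_c(G,r)$ vanishing on $E_0$ (possible since the restriction map to $\mathbbmsl{R}^{E_0}$ has nontrivial kernel), and show by induction that $\phi$ vanishes on every activated edge --- when $e$ is activated through a star centered at $u$, the polynomial $P_u$ has degree at most $r-1$ and vanishes at the $r$ distinct colors of the active star edges, hence $P_u\equiv 0$ and $\phi(e)=P_u(c(e))=0$, contradicting $\phi\neq 0$. That version buys a small simplification: one never needs the explicit Lagrange coefficients, only the fact that a polynomial of degree at most $r-1$ with $r$ distinct roots is identically zero; your dual version buys a reusable structural fact (the local linear dependency among evaluation functionals) at the cost of slightly more bookkeeping. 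The two are linearly-dual formulations of the same mechanism, and both are valid.
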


\begin{lemma}\label{lmekn}
For every positive    integers $n$ and $r$ with $n\geqslant r+1$, there exists a proper edge  coloring $c: E(K_n)\longrightarrow\mathbbmsl{R}$ such that $\dim W_c(K_n, r)\geqslant{{r+1}\choose{2}}$.
\end{lemma}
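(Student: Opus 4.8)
The goal is to construct a proper edge coloring $c: E(K_n) \to \mathbb{R}$ and then prove $\dim W_c(K_n, r) \geq \binom{r+1}{2}$. The plan is to exhibit an explicit proper edge coloring and then produce $\binom{r+1}{2}$ linearly independent functions $\phi$ in $W_c(K_n,r)$ by directly writing down, for each such $\phi$, a family of recognizing polynomials $\{P_v(x)\}_{v \in V(K_n)}$ of degree at most $r-1$.

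First I would pin down the coloring. A natural and standard choice for $K_n$ with vertex set $[\![n]\!]$ is $c(uv) = u + v$, which is proper since for a fixed vertex $u$ the values $u+v$ are distinct as $v$ ranges over the other vertices; the condition $n \geq r+1$ guarantees there is enough room. With this coloring, condition (ii) reads $P_u(u+v) = P_v(u+v) = \phi(uv)$ for every edge. The key observation is that $W_c(K_n,r)$ is a vector space, so it suffices to count the dimension, and the cleanest way to do that is to parametrize the recognizing data rather than $\phi$ itself.

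The main step is the dimension count. I would consider the linear map that sends a tuple of polynomials $(P_v)_{v}$, each of degree at most $r-1$, to the induced function $\phi$ (when it is consistent, i.e. when $P_u(u+v)=P_v(u+v)$ holds on every edge). The subtlety is that the consistency constraints (ii) cut down the $n \cdot r$ free coefficients, and the map to $\phi$ has a kernel consisting of tuples that recognize the zero function. My approach would be to show that choosing a single global polynomial $Q(x)$ of degree at most $2(r-1)$ and setting things up so that $P_v$ captures the relevant "half" of $Q$ evaluated along edges at $v$ produces valid recognized functions; concretely, monomials $\phi_{ij}(uv) = (u+v)^k$ type expressions, or symmetric-function building blocks indexed by pairs, should all lie in $W_c$. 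The count $\binom{r+1}{2}$ strongly suggests indexing a basis by pairs $(i,j)$ with $0 \leq i \leq j \leq r-1$, i.e. by the products $u^i v^j + u^j v^i$ restricted to edges, each of which is recognized by taking $P_v(x)$ to be an appropriate degree-$(r-1)$ polynomial in $x$ with coefficients depending on $v$.

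The hard part will be verifying that these candidate functions are genuinely recognizable by degree-$(r-1)$ polynomials (condition (i) is the binding constraint) and, simultaneously, that they are linearly independent as functions on $E(K_n)$. For recognizability, given $\phi_{ij}$ built from $u^i v^j + u^j v^i$, I would need to exhibit $P_v(x)$ of degree $\leq r-1$ with $P_v(u+v)$ equal to this symmetric expression for all neighbors $u$; this amounts to interpolation, and the degree bound $r-1$ against indices $i,j \leq r-1$ is exactly where the combinatorics must be checked carefully, likely by expanding in the variable $x = u+v$ and matching $v$-dependent coefficients. For independence, I would evaluate the candidate functions on a well-chosen set of edges (for instance edges incident to a fixed small set of vertices) and argue the resulting evaluation matrix has full rank, for which the distinctness of colors and the assumption $n \geq r+1$ again play the decisive role. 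Combining recognizability and independence yields $\dim W_c(K_n,r) \geq \binom{r+1}{2}$, completing the proof.
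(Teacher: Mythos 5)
Your approach is viable and genuinely different from the paper's, but as written it stops at exactly the two claims that constitute the proof; fortunately both close easily, and you should have closed them. For recognizability, no careful coefficient-matching is needed: with your coloring $c(uv)=u+v$ and your candidates $\phi_{ij}(uv)=u^iv^j+u^jv^i$ for $0\leqslant i\leqslant j\leqslant r-1$, simply substitute $u=x-v$ and take
$$P_v(x)=(x-v)^i v^j+(x-v)^j v^i,$$
which has degree $j\leqslant r-1$, satisfies $P_v(u+v)=u^iv^j+u^jv^i$, and by the symmetry of $\phi_{ij}$ in $u,v$ also satisfies $P_u(u+v)=P_v(u+v)$; so each $\phi_{ij}$ lies in $W_c(K_n,r)$. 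For independence, suppose $\sum_{i\leqslant j}\lambda_{ij}\phi_{ij}$ vanishes on every edge, and let $F(u,v)$ be the corresponding symmetric polynomial, of degree at most $r-1$ in each variable. For each fixed $v_0\in[\![n]\!]$, the univariate polynomial $F(u,v_0)$ vanishes at the $n-1\geqslant r$ points of $[\![n]\!]\setminus\{v_0\}$, hence identically; then for any fixed real $u_0$, the polynomial $F(u_0,v)$ vanishes at all $n\geqslant r+1$ points of $[\![n]\!]$, hence identically. So $F$ is the zero polynomial, and since the monomial symmetric functions $u^iv^j+u^jv^i$ are linearly independent in $\mathbbmsl{R}[u,v]$, all $\lambda_{ij}=0$. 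Note that $n\geqslant r+1$ is needed exactly here, not for properness of the coloring (which holds for all $n$). Also, your detour through a ``global polynomial $Q$ of degree $2(r-1)$'' and the functions $(u+v)^k$ is a dead end --- powers of $u+v$ alone yield at most $r$ independent functions --- but your pair-indexed symmetric monomials are the right objects.

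The paper proceeds differently: it colors multiplicatively, $c(ij)=\gamma_i\gamma_j$ for distinct nonzero reals $\gamma_i$, and indexes its $\binom{r+1}{2}$ functions by the edges $uv$ of the clique on $[\![r+1]\!]$, building Lagrange-type products $P^{uv}_i$ so that the resulting $\phi_{uv}$ is nonzero on $uv$ but vanishes on every other edge inside $[\![r+1]\!]$; linear independence is then immediate from this indicator structure, at the price of messier explicit polynomials (three cases, with correction factors for the vertices outside $[\![r+1]\!]$). Your construction buys simpler recognizing polynomials (a single change of variable) and a natural basis of symmetric monomials, at the price of a short vanishing argument for independence. Either coloring serves equally well as the base case of Lemma \ref{car}, which accepts any proper edge coloring of the factor graph, so your version would slot into the rest of Section 3 unchanged.
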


\begin{proof}
We introduce an edge coloring $c$ and $\binom{r+1}{2}$ independent vectors in $W_c(K_n,r)$.
Fix  arbitrary distinct nonzero real numbers $\gamma_0, \gamma_1, \ldots, \gamma_{n-1}$ and let   $c(ij)=\gamma_i\gamma_j$ for any   edge $ij\in E(K_n)$. Obviously, $c: E(K_n)\longrightarrow\mathbbmsl{R}$  is a proper edge  coloring of $K_n$.
For each   edge $uv\in E(K_n)$ with  $u, v\in[\![r+1]\!]$,   we define   polynomials $P^{uv}_0(x), P^{uv}_1(x), \ldots, P^{uv}_{n-1}(x)$  as follows.  For any  $i\in[\![n]\!]$, let
$$P^{uv}_i(x)=\left\{\begin{array}{ll}
 \quad 0,  &   \mbox{if } i\in[\![r+1]\!]\setminus\{u, v\};\\
 \vspace{0.5mm}\\
{\mathlarger{\mathlarger{\prod_{{k\in[\![r+1]\!]}\atop{k\notin\{u, v\}}}}} \, {\mathlarger{\frac{x-\gamma_i\gamma_k}{\gamma_u\gamma_v-\gamma_i\gamma_k}}}},  &   \mbox{if } i\in\{u, v\};\\
\vspace{0.5mm}\\
{\mathlarger{\mathlarger{\prod_{{k\in[\![r+1]\!]}\atop{k\notin\{u, v\}}}}} \, {\mathlarger{\frac{(x-\gamma_i\gamma_k)(\gamma_i-\gamma_k)}{\gamma_i(\gamma_u-\gamma_k)(\gamma_v-\gamma_k)}}}},  &  \mbox{if } i\in\{r+1, \ldots, n-1\}.\end{array}\right.$$
It is not hard to check that $\deg(P^{uv}_i)\leqslant r-1$ and $P^{uv}_i(c(ij))=P^{uv}_j(c(ij))$.
Define  $\phi_{uv}: E(K_n)\longrightarrow\mathbbmsl{R}$ as $\phi_{uv}(ij)=P^{uv}_i(c(ij))$. Note that  $\phi_{uv}$  vanishes    on each  edge $ij$ with    $i, j\in[\![r+1]\!]$ except on $uv$.  From this,  it follows that  $\{\phi_{uv}\}_{u, v\in[\![r+1]\!]}$  is a  linearly  independent subset of   $W_c(K_n, r)$. This completes  the proof.
\end{proof}

\begin{lemma}\label{car}
Let   $n, r$ be  two   positive integers  and let  $c : E(G)\longrightarrow\mathbbmsl{R}$ be a proper edge coloring of a graph $G$.  Then, there is a proper edge  coloring $\widehat{c} : E(G\square K_n)\longrightarrow\mathbbmsl{R}$ such that $$\dim W_{\widehat{c}}\,(G\square K_n, r)\geqslant\sum_{t=0}^{n-1}\dim W_c(G, r-t),$$
where $W_c(G, i)$ is defined  to be  $\{0\}$ if  $i\leqslant0$.
\end{lemma}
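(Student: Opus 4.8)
The plan is to realise $G\square K_n$ as $n$ disjoint copies of $G$, indexed by $i\in[\![n]\!]$ with vertices written $(v,i)$, glued along the fibers $\{(v,i):i\in[\![n]\!]\}\cong K_n$. I will call the edges inside a single copy the \emph{horizontal} edges and the fiber edges the \emph{vertical} edges. Fix distinct nonzero reals $\gamma_0,\dots,\gamma_{n-1}$ and define $\widehat c$ by $\widehat c\big((u,i)(v,i)\big)=c(uv)$ on horizontal edges and $\widehat c\big((v,i)(v,j)\big)=\gamma_i\gamma_j$ on vertical edges, echoing the coloring used for $K_n$ in Lemma \ref{lmekn}. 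By choosing the $\gamma_i$ generically I may assume, beyond their being distinct and nonzero, that no product $\gamma_i\gamma_j$ with $i\ne j$ equals any color $c(uv)$; this finite list of exclusions simultaneously makes $\widehat c$ a proper edge coloring and supplies the separation that the independence argument will need.

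Next, for each level $t\in\{0,\dots,n-1\}$ I would introduce the auxiliary polynomials $Q_i^{(t)}(x)=(\gamma_i^{-1}x+\gamma_i)^{t}$, of degree exactly $t$. The crucial identity is that on a vertical edge $Q_i^{(t)}(\gamma_i\gamma_j)=(\gamma_i+\gamma_j)^{t}=Q_j^{(t)}(\gamma_i\gamma_j)$, so the family $\{Q_i^{(t)}\}_i$ agrees across every fiber. Now take any $t$ with $r-t\ge1$ and any $\psi\in W_c(G,r-t)$ recognized by polynomials $\{P_v\}_{v\in V(G)}$ of degree at most $r-t-1$, and set $R_{(v,i)}(x)=P_v(x)\,Q_i^{(t)}(x)$, of degree at most $r-1$. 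On a horizontal edge both endpoints give $\psi(uv)\,Q_i^{(t)}(c(uv))$ since $P_u(c(uv))=P_v(c(uv))=\psi(uv)$, and on a vertical edge both endpoints give $P_v(\gamma_i\gamma_j)(\gamma_i+\gamma_j)^{t}$ by the identity above; hence $\{R_{(v,i)}\}$ recognizes a function $\widehat\psi_{t,\psi}\in W_{\widehat c}(G\square K_n,r)$.

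Finally I would show that, as $t$ ranges over $\{0,\dots,n-1\}$ and $\psi$ ranges over a basis $B_t$ of $W_c(G,r-t)$, the functions $\widehat\psi_{t,\psi}$ are linearly independent, which yields exactly $\sum_{t=0}^{n-1}\dim W_c(G,r-t)$ independent vectors. Suppose $\sum_{t,\psi}\lambda_{t,\psi}\widehat\psi_{t,\psi}=0$ and write $\Psi_t=\sum_{\psi\in B_t}\lambda_{t,\psi}\psi\in W_c(G,r-t)$. Evaluating the combination on the horizontal edges of copy $i$ gives, for every edge $uv$ of $G$, the scalar equation $\sum_t \Psi_t(uv)\,\big(\gamma_i^{-1}c(uv)+\gamma_i\big)^{t}=0$. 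Fixing $uv$ and letting $i$ vary, the coefficient matrix is the Vandermonde matrix with nodes $\theta_i=\gamma_i^{-1}c(uv)+\gamma_i$, and $\theta_i-\theta_j=(\gamma_i-\gamma_j)(\gamma_i\gamma_j-c(uv))/(\gamma_i\gamma_j)$ is nonzero precisely because $\gamma_i\gamma_j\ne c(uv)$; so the matrix is invertible and $\Psi_t(uv)=0$ for all $t$. As $uv$ is arbitrary each $\Psi_t$ vanishes, and linear independence of $B_t$ forces every $\lambda_{t,\psi}=0$.

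The main obstacle is the simultaneous design of the level polynomials $Q_i^{(t)}$: they must be consistent on the vertical $K_n$-edges, so that multiplying by the $G$-polynomials still yields a legitimate element of $W_{\widehat c}$, and yet their evaluations at the horizontal colors $c(uv)$ must form a nonsingular matrix so that the $n$ levels can be disentangled. The choice $(\gamma_i^{-1}x+\gamma_i)^{t}$ is what reconciles these competing demands, converting fiber consistency into the symmetry $(\gamma_i+\gamma_j)^{t}$ and linear independence into an honest Vandermonde determinant whose nodes are kept distinct exactly by the separation $\gamma_i\gamma_j\ne c(uv)$ that was built into $\widehat c$.
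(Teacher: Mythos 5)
Your proof is correct, and its skeleton coincides with the paper's: the same coloring $\widehat c$ (horizontal edges keep their $c$-colors, vertical edges get $\gamma_i\gamma_j$, with the products kept out of the image of $c$), and the same device of multiplying the recognizing polynomials of $G$ by degree-$t$ fiber polynomials that agree across the $K_n$-fibers, giving $\sum_{t=0}^{n-1}\dim W_c(G,r-t)$ candidate vectors. Where you genuinely diverge is in the choice of fiber polynomials and, consequently, in the independence argument. The paper uses $\mathnormal{\Gamma}^t_i(x)=\prod_{\ell=0}^{t-1}(\gamma_i-\gamma_\ell)\left(\frac{x}{\gamma_i}-\gamma_\ell\right)$, which is \emph{triangular}: $\mathnormal{\Gamma}^t_i\equiv 0$ whenever $i<t$. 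Independence then follows by taking the smallest level $\tau$ carrying a nonzero coefficient and restricting to the horizontal edges of copy $\tau$, where all higher levels vanish identically and $\mathnormal{\Gamma}^\tau_\tau(c(gh))\neq 0$ finishes the job. Your $Q^{(t)}_i(x)=(\gamma_i^{-1}x+\gamma_i)^t$ has no such vanishing, so you instead invert the whole system at once: for each fixed edge $uv$ of $G$, the evaluations across the $n$ copies form a Vandermonde system in the nodes $\theta_i=\gamma_i^{-1}c(uv)+\gamma_i$, pairwise distinct exactly because $\gamma_i\gamma_j\neq c(uv)$ --- the same genericity hypothesis that the paper uses to get $\mathnormal{\Gamma}^\tau_\tau(c(gh))\neq 0$. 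Your mechanism is more symmetric (all levels are disentangled simultaneously, with no minimality argument), while the paper's triangular choice avoids any determinant computation; the cost is about the same. One small point to make explicit: when $t\geqslant r$ the spaces $W_c(G,r-t)$ are trivial, so in the Vandermonde step you should either pad the unknown vector with $\Psi_t=0$ for those $t$ to keep the matrix square, or observe that a polynomial of degree at most $n-1$ vanishing at $n$ distinct nodes is identically zero; either remark closes the gap and the conclusion $\Psi_t(uv)=0$ for all $t$ stands.
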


\begin{proof}
Consider arbitrary distinct nonzero real numbers $\gamma_0, \gamma_1, \ldots, \gamma_{n-1}$  such that none of the numbers   $\gamma_i\gamma_j$ is   in the image of $c$. For every   two adjacent vertices $u=(g, i)$
and $v=(h, j)$ of  $G\square K_n$,  define
$$\widehat{c}(uv)=\left\{\begin{array}{ll}
c(gh),  &   \mbox{if } i=j;\\
\vspace{-1mm}\\
\gamma_i\gamma_j,  &  \mbox{if } g=h.\end{array}\right.$$
Fix  $t\in[\![n]\!]$,  a basis  $\mathscr{B}_t$ for  $W_c(G, r-t)$   and  a function   $\phi\in\mathscr{B}_t$.  According to Definition \ref{defhh}, there exist polynomials $\{P^{\phi}_g(x)\}_{g\in V(G)}$
recognizing  $\phi$. Define    polynomial $Q^{t,\phi}_u$ for any vertex $u=(g,i)\in V(G\square K_n)$
as   $Q^{t,\phi}_u(x)=P^{\phi}_g(x)\mathnormal{\Gamma}^t_i(x),$ where
$$\mathnormal{\Gamma}^t_i(x)=\prod_{\ell=0}^{t-1}(\gamma_i-\gamma_\ell)\left(\frac{x}{\gamma_i}-\gamma_\ell\right).$$
Note that $\mathnormal{\Gamma}^t_i(\gamma_i\gamma_j)=\mathnormal{\Gamma}^t_j(\gamma_i\gamma_j)$ for all   $i$ and $j$. Also,    we know from Definition \ref{defhh} that  $P^{\phi}_g(c(gh))=P^{\phi}_h(c(gh))$ for each edge $gh\in E(G)$. Hence,     $Q^{t,\phi}_u$ and $Q^{t,\phi}_v$ have the same value on $\widehat{c}(uv)$  for any  edge  $uv\in E(G\square K_n)$.
This implies that  $\{Q^{t,\phi}_u\}_{u\in V(G\square K_n)}$   recognize   a function $\mathnormal{\Psi}_{t,\phi}\in W_{\widehat{c}}(G\square K_n, r)$.

Since we may choose  the pair  $(t, \phi)$ in $\sum_{t=0}^{n-1}\dim W_c(G, r-t)$ different ways,  it remains  to show that all functions $\mathnormal{\Psi}_{t,\phi}$ are linearly independent.
Suppose  that $\sum_{t,\phi}\lambda_{t,\phi}\mathnormal{\Psi}_{t,\phi}=0$ for some scalars   $\lambda_{t,\phi}\in\mathbbmsl{R}$. Towards  a contradiction, assume   that $\tau$ is the smallest value such that $\lambda_{\tau,\phi}\neq0$ for some $\phi$. Obviously,  $\mathnormal{\Gamma}^t_i=0$ for any   $i<t$. This yields  that   $Q^{t,\phi}_{(g, \tau)}=0$ for every integer   $t>\tau$ and   vertex $g\in V(G)$.  Thus, for every two adjacent vertices   $u=(g, \tau)$ and $v=(h, \tau)$ in $G\square K_n$, we have   $$\sum_{t,\phi}\lambda_{t,\phi}\mathnormal{\Psi}_{t,\phi}(uv)=\sum_{t,\phi}\lambda_{t,\phi}Q^{t,\phi}_u\big(\widehat{c}(uv)\big)=
\sum_{\phi\in \mathscr{B}_\tau}\lambda_{\tau,\phi}P^{\phi}_g\big(c(gh)\big)\mathnormal{\Gamma}^\tau_\tau\big(c(gh)\big)=0.$$ Our assumption on  $\gamma_0, \gamma_1, \ldots, \gamma_{n-1}$   implies that  $\mathnormal{\Gamma}^\tau_\tau(c(gh))\neq0$. Therefore,
$$\left(\sum_{\phi\in \mathscr{B}_\tau}\lambda_{\tau,\phi}\phi\right)(gh)=\sum_{\phi\in \mathscr{B}_\tau}\lambda_{\tau,\phi}P^{\phi}_g\big(c(gh)\big)=0$$ for each edge  $gh\in E(G)$. This is a contradiction, since $\mathscr{B}_\tau$ is a basis for  $W_c(G, r-\tau)$.
\end{proof}

\begin{lemma}\label{umeg}
Let $n, r$ be  two   positive integers  and let  $G$ be a   graph  all whose vertices are of degree   at least $r$.
Then $$m_e(G\square K_n, r)\leqslant\sum_{t=0}^{n-1} m_e(G, r-t),$$
where $m_e(G, i)$  is defined  to be  $0$ if  $i\leqslant0$.
\end{lemma}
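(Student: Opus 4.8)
The plan is to exhibit an explicit percolating set for the $S_{r+1}$-bootstrap percolation process on $G\square K_n$ of size exactly $\sum_{t=0}^{n-1}m_e(G, r-t)$. Recall that in this process an inactive edge $uv$ becomes active precisely when one of its endpoints, say $u$, is already incident with at least $r$ active edges other than $uv$ (equivalently, $uv$ lies in a copy of $S_{r+1}$ centred at $u$ whose other $r$ edges are active). I regard $G\square K_n$ as $n$ disjoint copies $G_0, \ldots, G_{n-1}$ of $G$, where the \emph{layer} $G_t$ is the subgraph on $\{(g,t) : g\in V(G)\}$, together with a copy of $K_n$ on each \emph{fibre} $\{(g,0), \ldots, (g,n-1)\}$. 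For each $t$ with $0\leqslant t\leqslant n-1$, fix a minimum percolating set $F_t\subseteq E(G)$ for the $S_{r-t+1}$-bootstrap percolation process on $G$, so that $|F_t|=m_e(G, r-t)$, with $F_t=\varnothing$ when $r-t\leqslant0$. Let $E_0$ consist, for every $t$, of the layer edges $(g_1, t)(g_2, t)$ with $g_1g_2\in F_t$; then $|E_0|=\sum_{t=0}^{n-1}m_e(G, r-t)$, and it remains to prove that $E_0$ percolates.

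I would argue by induction on $t=0, 1, \ldots, n-1$ that, in the closure of $E_0$, the whole layer $G_t$ is active and every fibre edge incident to $G_t$ is active; by monotonicity of bootstrap percolation it suffices to process the layers in this order. The inductive engine has two parts. First, assume that at each vertex $(g,t)$ the $t$ fibre edges $(g,0)(g,t), \ldots, (g,t-1)(g,t)$ are already active (this is vacuous for $t=0$). I claim that layer $G_t$ then fully activates, which I verify by coupling the $S_{r-t+1}$-process on $G$ started from $F_t$ with the induced process on $G_t$: if an edge $g_1g_2$ activates in $G$ because $g_1$ acquires $r-t$ active incident edges other than $g_1g_2$, then in $G\square K_n$ the vertex $(g_1, t)$ carries the corresponding $r-t$ active layer edges together with its $t$ active fibre edges, hence at least $r$ active edges apart from $(g_1, t)(g_2, t)$, so this layer edge activates. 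Inducting on the steps of the $G$-process shows that the closure of $F_t$---namely all of $E(G)$, since $F_t$ percolates---is realised in layer $t$.

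Second, once $G_t$ is fully active, every vertex $(g,t)$ is incident with $\deg_G(g)\geqslant r$ active layer edges; consequently each fibre edge $(g,t)(g,t')$ activates, its endpoint $(g,t)$ already having at least $r$ active edges distinct from it. In particular, for each $g$ the fibre edge $(g,t)(g,t+1)$ becomes active, so that $(g,t+1)$ gains one further active fibre edge and, by the induction hypothesis, ends up with $t+1$ active fibre edges; this closes the induction. The hypothesis that every vertex of $G$ has degree at least $r$ is used exactly at this step. The degenerate case $r-t\leqslant0$ is consistent with the scheme: then $F_t=\varnothing$, but $t\geqslant r$ forces each $(g,t)$ to already possess at least $r$ active fibre edges, so every layer-$t$ edge activates immediately. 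Assembling the now fully activated layers and fibres shows that $E_0$ percolates, which yields the claimed bound.

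The main obstacle is the coupling in the first part: one must check that the effective threshold genuinely drops from $r$ to $r-t$ within layer $t$, and this hinges on the $t$ fibre edges at each vertex being available \emph{before} layer $t$ is treated. The layered induction supplies exactly this, since each earlier layer, upon becoming fully active, pushes one active fibre edge up to the current layer; monotonicity then lets me process the layers in the order $0, 1, \ldots, n-1$ without concern for interference between them.
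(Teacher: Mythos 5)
Your proof is correct and takes essentially the same approach as the paper: place a minimum percolating set for the $S_{r-t+1}$-bootstrap process in layer $t$, activate the layers in the order $t=0,1,\ldots,n-1$, and use the hypothesis that every vertex of $G$ has degree at least $r$ to activate the fibre edges that lower the effective threshold in later layers. In fact, your write-up spells out the fibre-edge activation step more explicitly than the paper does, but the underlying argument is identical.
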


\begin{proof}
For any $t$ with  $0 \leqslant t\leqslant \min\{r, n-1\}$, consider the    subgraph $G_t$  of $G\square K_n$ induced
by  $\{(v, t)\in V(G\square K_n) \,     | \,    v\in V(G)\}$ which  is clearly  isomorphic to $G$.  Also, consider   a  percolating  set $U_t$  of the  minimum possible size in the $S_{r-t+1}$-bootstrap percolation  process  on $G_t$  and activate its elements. We show that the edges of $G_0,   \dots, G_{n-1}$ become activated  in the $S_{r+1}$-bootstrap percolation process consecutively. At first,  the edges of  $G_0$ become activated  in $S_{r+1}$-bootstrap percolation process, according to the definition of  $U_0$.  Let   $t\geqslant1$ and assume that the edges of  $G_0,  \ldots, G_{t-1}$ are activated. Since any  vertex  $(v, t)\in V(G_t)$ is  incident to $t$ activated edges with endpoints in $\{(v, i) \, | \, 0\leqslant i\leqslant t-1\}$, we conclude that the edges of $G_t$ become activated   in the $S_{r+1}$-bootstrap percolation process  on $G_t$ by  considering   $U_t$ as the set of initially activated vertices. Hence, $\bigcup_{t=0}^{n-1}U_t$ is a percolating set    of size $\sum_{t=0}^{n-1} m_e(G, r-t)$ in the $S_{r+1}$-bootstrap percolation process on $G\square H$.
\end{proof}

\begin{thm}\label{lbknd}
Let  $n, r, d$  be positive integers with   $n\geqslant r+1$. Then  $m_e(K_n^d, r)={{d+r}\choose{d+1}}$.
\end{thm}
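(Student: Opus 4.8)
The plan is to prove both inequalities $m_e(K_n^d,r)\leqslant\binom{d+r}{d+1}$ and $m_e(K_n^d,r)\geqslant\binom{d+r}{d+1}$ by induction on $d$, treating $d=1$ as a base case and using the lemmas above to pass from dimension $d-1$ to dimension $d$. The combinatorial engine in both inductive steps is the hockey-stick identity $\sum_{s=1}^{r}\binom{d-1+s}{d}=\binom{d+r}{d+1}$, which is exactly what converts the single-coordinate sums in Lemmas \ref{car} and \ref{umeg} into the target binomial coefficient.

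For the lower bound, I would first observe that Lemma \ref{lmekn} together with Theorem \ref{hh} settles $d=1$, since $m_e(K_n,r)\geqslant\dim W_c(K_n,r)\geqslant\binom{r+1}{2}=\binom{1+r}{2}$. For the inductive step I would carry along the edge coloring: writing $K_n^d=K_n^{d-1}\square K_n$ and applying Lemma \ref{car} to a coloring of $K_n^{d-1}$ witnessing the inductive bound, I obtain a coloring $\widehat c$ with $\dim W_{\widehat c}(K_n^d,r)\geqslant\sum_{t=0}^{n-1}\dim W_c(K_n^{d-1},r-t)$. Invoking the inductive hypothesis $\dim W_c(K_n^{d-1},r-t)\geqslant\binom{d-1+r-t}{d}$ (valid because $n\geqslant r+1\geqslant(r-t)+1$, and the terms with $r-t\leqslant0$ vanish) and then applying the hockey-stick identity gives $\dim W_{\widehat c}(K_n^d,r)\geqslant\binom{d+r}{d+1}$; Theorem \ref{hh} yields the claimed lower bound.

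For the upper bound, the base case $d=1$ cannot be obtained from Lemma \ref{umeg}, since the single-vertex factor $K_n^0$ has degree $0<r$; so I would prove $m_e(K_n,r)\leqslant\binom{r+1}{2}$ by an explicit activation. Initially activate all edges of a fixed clique on $r+1$ vertices, then repeatedly attach one new vertex at a time: each clique vertex already carries $r$ active incident edges, so every edge joining it to a new vertex activates (using that vertex as the centre of a copy of $S_{r+1}$), enlarging the active clique until $E(K_n)$ is exhausted. For $d\geqslant2$ I would apply Lemma \ref{umeg} with $G=K_n^{d-1}$, whose vertices have degree $(d-1)(n-1)\geqslant r$, obtaining $m_e(K_n^d,r)\leqslant\sum_{t=0}^{n-1}m_e(K_n^{d-1},r-t)$; the same inductive bound and hockey-stick summation produce $\binom{d+r}{d+1}$.

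The main obstacle I anticipate is bookkeeping rather than a single hard idea: I must ensure the inductive hypotheses are applied at the shifted parameter $r-t$ within the legal range (where $n\geqslant r+1$ guarantees $n\geqslant(r-t)+1$, and the degree hypothesis of Lemma \ref{umeg} holds for $d\geqslant2$), that the vanishing conventions $W_c(G,i)=\{0\}$ and $m_e(G,i)=0$ for $i\leqslant0$ line up with the corresponding binomial coefficients being zero, and that the explicit $d=1$ construction genuinely percolates. Once these points are checked, the two inductions meet at the common value $\binom{d+r}{d+1}$ and the theorem follows.
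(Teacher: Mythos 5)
Your proposal is correct and follows essentially the same route as the paper: both inequalities by induction on $d$, with Lemma \ref{lmekn} plus Theorem \ref{hh} for the lower-bound base case, Lemma \ref{car} for the lower-bound inductive step, an explicit clique-on-$[\![r+1]\!]$ percolating set for the upper-bound base case, Lemma \ref{umeg} for the upper-bound inductive step, and the hockey-stick identity $\sum_{t=0}^{r-1}\binom{d-1+r-t}{d}=\binom{d+r}{d+1}$ closing both inductions. Your additional bookkeeping (the degree check $(d-1)(n-1)\geqslant r$ for Lemma \ref{umeg}, the vanishing conventions, and the explicit $d=1$ activation) only makes explicit what the paper leaves implicit.
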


\begin{proof}
First, we prove by induction  on $d$ that there exists a proper edge coloring  $c_d : E(G)\longrightarrow\mathbbmsl{R}$ such that   $\dim W_{c_d}(K_n^d, r)\geqslant{{d+r}\choose{d+1}}$. In view of    Lemma  \ref{lmekn},  there is nothing to prove for  $d=1$. By     Lemma  \ref{car} and the induction  hypothesis, there is a proper edge coloring  $c_d : E(K_n^d)\longrightarrow\mathbbmsl{R}$ such that
\begin{align*}
\dim W_{c_d}\!\left(K_n^d, r\right)&\geqslant\sum_{t=0}^{n-1}\dim W_{c_{d-1}}\!\left(K_n^{d-1}, r-t\right)\\&
\geqslant\sum_{t=0}^{r-1}{{d-1+r-t}\choose{d}}\\&
={{d+r}\choose{d+1}}.
\end{align*}
  It follows from  Theorem \ref{hh} that   $m_e(K_n^d,r)\geqslant{{d+r}\choose{d+1}}$.
Now, we establish  by induction  on $d$ that  $m_e(K_n^d,r)\leqslant{{d+r}\choose{d+1}}$.
The edges of $K_n$  with two endpoints  in $[\![r+1]\!]$    clearly form  a percolating set  in the $S_{r+1}$-bootstrap percolation process on $K_n$ and so there is nothing to prove for  $d=1$.   By applying  Lemma \ref{umeg} and  the induction  hypothesis, we obtain  that
\begin{align*}m_e\!\left(K_n^d, r\right)&\leqslant\sum_{t=0}^{n-1}m_e\!\left(K_n^{d-1}, r-t\right)\\&
\leqslant\sum_{i=0}^{r-1}{{d-1+r-t}\choose{d}}\\&
={{d+r}\choose{d+1}}.
\qedhere\end{align*}
\end{proof}

\section{Multi-dimensional Hamming graphs}

 Balister,    Bollob\'{a}s,    Lee   and   Narayanan   \cite{balister}  gave    the  lower bound  $(r/d)^d$   and the  approximate upper bound  $r^d/(2d!)$ on $m(K_n^d, r)$. In this   section, we improve both bounds  which   result in   an asymptotic formula for $m(K_n^d, r)$.  To begin with,  let us fix the notation we shall use throughout this   section.  We set   $d\geqslant2$ and $\delta=(d-2)/(d-1)$.
For a point  $t=(t_1, \ldots, t_d)\in\{0, 1\}^d$  and a subset $P\subseteq[\![n]\!]^d$, we define
$$P(t)=\left\{(x_1, \ldots, x_d)\in[\![n]\!]^d  \, \left| \,
\begin{array}{ll}
    \text{There exists  $(p_1, \ldots, p_d)\in P$ such that} \\
    \text{$x_i=t_i(n-1-p_i)+(1-t_i)p_i$ for all $i$.}
   \end{array}
    \right.
\right\}.$$
Roughly speaking,  $P(t)$  is a region in  $[\![n]\!]^d$  congruent to $P$ around the point $(n-1)t$ instead of the origin.
For the sets
$$A_r^d=\left\{(x_1,   \ldots, x_d)\in[\![n]\!]^d \, \,  \left| \, \,   \sum_{i=1}^d x_i\leqslant\left\lceil\tfrac{r}{2}\right\rceil-1\right.\right\},$$
$$B_r^d=\left\{(x_1,   \ldots, x_d)\in[\![n]\!]^d \, \, \left| \,  \, x_1+x_2+\delta\sum_{i=3}^d x_i<\delta\left(\left\lceil\tfrac{r}{2}\right\rceil-1\right)\right.\right\}$$
and $C_r^d=A_r^d\setminus B_r^d$, we  define   $$\mathcal{A}_r^d=\bigcup_{t\in T}A_r^d(t), \, \, \mathcal{B}_r^d=\bigcup_{t\in T}B_r^d(t) \,   \text{ and }  \,   \mathcal{C}_r^d=\bigcup_{t\in T}C_r^d(t),$$
where $T=\left\{(t_1, \ldots, t_d)\in \{0,1\}^d \, \Big| \,  t_1=t_2\right\}$.

\begin{lemma}\label{tupor}
Let $n, r, d$ be      positive integers  with $n\geqslant r+1$ and $d\geqslant2$. Then  $\mathcal{A}_r^d$ is a percolating set  of  $K_n^d$  in the $r$-neighbor bootstrap percolation process.
\end{lemma}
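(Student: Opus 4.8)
The plan is to run an induction on the quantity $g(x)=\sum_{i=1}^{d}\min\{x_i,\,n-1-x_i\}$, the $\ell_1$-distance from $x$ to the nearest corner of the cube $[\![n]\!]^d$. Writing $\rho=\lceil r/2\rceil-1$ and, for $t\in\{0,1\}^d$, $D_t(x)=\sum_{t_i=0}x_i+\sum_{t_i=1}(n-1-x_i)$, one has $g(x)=\min_t D_t(x)$, so that $\{x:g(x)\le\rho\}=\bigcup_{t\in\{0,1\}^d}A_r^d(t)$ is the union of all $2^d$ corner simplices, whereas the initially active set $\mathcal{A}_r^d$ is the sub-union over $t\in T$. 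A short computation shows that a point $x$ lies in $\mathcal{A}_r^d$ exactly when $g(x)\le\rho$ and $x_1,x_2$ lie on the same side (both $\le(n-1)/2$ or both $\ge(n-1)/2$); the points with $g(x)\le\rho$ but $x_1,x_2$ on opposite sides are precisely the elements of the missing simplices $A_r^d(t)$ with $t_1\ne t_2$. I will prove by induction on $k=g(x)$ that every vertex eventually activates.

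For the growth phase, suppose $k\ge\rho+1$ and that all vertices of smaller $g$-value are already active. For each coordinate $j$ there are exactly $2\min\{x_j,n-1-x_j\}$ values to which $x_j$ may be changed so as to strictly decrease $g$; over all $j$ these yield a family of $2g(x)=2k$ distinct neighbours of $x$, all of smaller $g$-value and hence active. Since $2k\ge 2\lceil r/2\rceil\ge r$, the vertex $x$ activates. This disposes of every vertex outside the union of corner simplices, and simultaneously reduces the whole assertion to activating the set $\{x:g(x)\le\rho\}$.

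The remaining vertices with $g(x)\le\rho$ fall into two cases. If $x_1,x_2$ are on the same side then $x\in\mathcal{A}_r^d$ and $x$ is active from the outset. The crux is the opposite-side case, say $x_1$ near $0$ and $x_2$ near $n-1$: here $x$ is not initially active and must be reached from the two seed corners obtained by flipping the first or the second bit of its nearest corner, both of which satisfy $t_1=t_2$ and therefore lie in $T$. Writing $a_1=x_1$, $a_2=n-1-x_2$ and $a_i=\min\{x_i,n-1-x_i\}$ for $i\ge3$, so that $k=a_1+\cdots+a_d$, I would exhibit the following pairwise disjoint families of active neighbours: along coordinate $2$, the $\rho-k+a_2+1$ seed vertices of $A_r^d(0,0,\sigma_3,\dots,\sigma_d)$ having small $x_2$; along coordinate $1$, the $\rho-k+a_1+1$ seed vertices of $A_r^d(1,1,\sigma_3,\dots,\sigma_d)$ having large $x_1$ (where $\sigma_i$ is the near side of $x_i$); and, among the single-coordinate moves that strictly decrease $g$ and are not already counted, the $a_1$ small-value moves in coordinate $1$, the $a_2$ large-value moves in coordinate $2$, and the $2a_i$ moves in each coordinate $i\ge3$, all of smaller $g$-value and hence active by induction. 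Summing the seed contributions $\,(\rho-k+a_1+1)+(\rho-k+a_2+1)\,$ and the inductive contributions $\,a_1+a_2+2\sum_{i\ge3}a_i\,$ gives exactly $2\rho+2=2\lceil r/2\rceil\ge r$, so $x$ activates.

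The main obstacle is precisely this opposite-side count: one must select the two seed corners and the disjoint families so that the seed and inductive contributions combine to the constant $2\lceil r/2\rceil$ irrespective of how deep $x$ sits in the missing simplex, the telescoping $-2k+2k$ being exactly what makes the total independent of $k$. Verifying disjointness needs only $n\ge r+1$, which forces $\rho-k+2a_2\le r-1<n-1$ and thereby keeps the small- and large-value ranges in each coordinate apart and away from $x$ itself; the base case $k=0$ is the two-seed computation with all $a_i=0$. Once the induction is complete, $\bigcup_k\{x:g(x)=k\}=V(K_n^d)$ is activated, proving that $\mathcal{A}_r^d$ is a percolating set.
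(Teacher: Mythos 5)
Your proposal is correct, and it takes a genuinely different route from the paper. The paper proves Lemma \ref{tupor} by induction on the dimension $d$, with Theorem \ref{dim2} as the base case: it slices $K_n^d$ into the layers $P_i=\{x\in[\![n]\!]^d \mid x_d=i\}$, observes that the trace of $\mathcal{A}_r^d$ on $P_i$ and on $P_{n-1-i}$ is a copy of $\mathcal{A}_{r-2i}^{d-1}$, activates the layer pairs $P_i\cup P_{n-1-i}$ from the outside in (each vertex of the $i$th pair already having $2i$ active neighbours in earlier pairs, so the $(r-2i)$-neighbour process inside the pair finishes the job by the induction hypothesis), and finally activates all middle layers at once. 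You instead run a single induction on the $\ell_1$-distance $g(x)$ to the nearest corner, split into a growth phase (for $g(x)>\lceil r/2\rceil-1$ the vertex has $2g(x)\geqslant 2\lceil r/2\rceil\geqslant r$ closer-to-corner neighbours) and a telescoping count for the unseeded corner simplices $A_r^d(t)$ with $t_1\neq t_2$, where seeds from the two adjacent simplices with $t_1=t_2$ combine with inductively activated neighbours to give exactly $2\lceil r/2\rceil\geqslant r$ distinct active neighbours; I checked the counts $(\rho-k+a_1+1)+(\rho-k+a_2+1)+a_1+a_2+2\sum_{i\geqslant3}a_i=2\rho+2$ and the disjointness conditions, and they are sound, with $n\geqslant r+1$ used exactly where you say. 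What your approach buys is self-containedness and transparency: it does not invoke Theorem \ref{dim2} (so it independently reproves the $d=2$ upper bound for the set $\mathcal{A}_r^2$), it works uniformly in all dimensions $d\geqslant2$, it makes the activation order explicit, and it explains structurally why seeding only the $2^{d-1}$ corners with $t_1=t_2$ suffices to recover the other $2^{d-1}$ corners. What the paper's approach buys is brevity: by reusing the recursive structure $\mathcal{A}_r^d\cap P_i\cong\mathcal{A}_{r-2i}^{d-1}$ and delegating the two-dimensional geometry to Theorem \ref{dim2}, it avoids all of the explicit neighbour bookkeeping that your argument must carry out.
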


\begin{proof}
Let $s=\ceil{r/2}$.
We use an induction argument  on $d$. Theorem \ref{dim2} concludes  the assertion for $d=2$. Let  $d\geqslant3$ and  assume that the assertion holds for $d-1$.
Set   $P_i=\{(x_1, \ldots, x_d)\in[\![n]\!]^d  \,  | \,  x_d=i\}$ and $Q_i=P_i\cap\mathcal{A}_r^d$.
It is not hard to check  that, after  ignoring the last coordinate, both $Q_i$ and $Q_{n-1-i}$ are exactly $\mathcal{A}_{r-2i}^{d-1}$ for any  $i\in[\![s]\!]$.

We consider the following iterative procedure for any  $i\in[\![s]\!]$. At  step $i$, we show that  the vertices in   $P_i\cup P_{n-1-i}$ become activated. The induction hypothesis implies
that  all vertices in $P_0$ and $P_{n-1}$ are activated   by $Q_0$ and $Q_{n-1}$, respectively. Hence,  there is nothing to prove   for $i=0$. Assume that  $i\geqslant1$.  Each vertex
in $P_i\cup P_{n-1-i}$ has already $2i$ activated  neighbors from the previous steps.  So, in order to activate the vertices in $P_i\cup P_{n-1-i}$, it is enough to consider the  $(r-2i)$-neighbor bootstrap percolation process on  $P_i\cup P_{n-1-i}$. This is done  by the induction hypothesis and  by considering      $Q_i\cup Q_{n-1-i}$ as the  initially  activated  set, since  both  $Q_i$ and $Q_{n-1-i}$ are copies of $\mathcal{A}_{r-2i}^{d-1}$.

Finally,  we observe that  any  vertex  in  $\bigcup_{i=s}^{n-s-1}P_i$ has at least $r$  neighbors in  $\bigcup_{i=0}^{s-1}(P_i\cup P_{n-1-i})$ and so it becomes  activated. This completes the proof,
since     $\bigcup_{i=0}^{n-1} P_i=[\![n]\!]^d$  and $\bigcup_{i=0}^{n-1}Q_i=\mathcal{A}_r^d$.
\end{proof}

\begin{lemma}\label{tukhali}
Let $n, r, d$ be      positive integers  with  $n\geqslant r+1$ and $d\geqslant2$. Then 	  $\mathcal{C}_r^d$ is a percolating set of      $K_n^d$  in the $r$-neighbor bootstrap percolation process.
\end{lemma}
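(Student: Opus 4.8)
My plan is to deduce the statement from Lemma~\ref{tupor} by a monotonicity argument. Since $C_r^d=A_r^d\setminus B_r^d$ and $B_r^d\subseteq A_r^d$, we have $A_r^d(t)=C_r^d(t)\cup B_r^d(t)$ for every $t\in T$, and hence $\mathcal{A}_r^d=\mathcal{C}_r^d\cup\mathcal{B}_r^d$ with $\mathcal{C}_r^d\subseteq\mathcal{A}_r^d$. Bootstrap percolation is monotone, in the sense that the closure of a larger initial set contains the closure of a smaller one. Therefore, if I can show that the process started from $\mathcal{C}_r^d$ eventually activates every vertex of $\mathcal{B}_r^d$, then the closure of $\mathcal{C}_r^d$ contains $\mathcal{A}_r^d$, so it also contains the closure of $\mathcal{A}_r^d$, which by Lemma~\ref{tupor} is all of $[\![n]\!]^d$. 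Thus the whole problem reduces to activating $\mathcal{B}_r^d$ starting from $\mathcal{C}_r^d$.

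Next I would exploit symmetry. Writing $m_t$ for the coordinate reflection that sends $x_i\mapsto n-1-x_i$ exactly when $t_i=1$, the index set $T=\{t:t_1=t_2\}$ is a subgroup of $\{0,1\}^d$, each $m_t$ is an automorphism of $K_n^d$, and left translation by $m_{t'}$ permutes the copies, so $\mathcal{C}_r^d$ and $\mathcal{B}_r^d$ are invariant under the group $\{m_t:t\in T\}$. Consequently the entire process is equivariant, and it suffices to prove that every vertex of the base copy $B_r^d$ (the one with $t=0$) becomes active. For this it is convenient to record that, with $s=\lceil r/2\rceil$ and $\sigma=x_1+\cdots+x_d$, the defining inequality of $B_r^d$ is equivalent to $x_1+x_2<(d-2)(s-1-\sigma)$; this is exactly where the value $\delta=(d-2)/(d-1)$ enters, since $\delta/(1-\delta)=d-2$. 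Thus $B_r^d$ consists of the points of the simplex $A_r^d$ whose first two coordinates are small relative to the slack $s-1-\sigma$, while $C_r^d$ retains the outer part of $A_r^d$, including the whole boundary face $\sigma=s-1$.

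Finally I would activate the vertices of $B_r^d$ by an inside-out sweep analogous to the one in Lemma~\ref{tupor}. For a vertex $v$ under consideration, its active neighbours come from two sources: same-copy neighbours obtained by increasing one coordinate until the point crosses into $C_r^d$, and neighbours lying in a reflected copy $C_r^d(e_j)$, where $e_j\in T$ denotes the element reflecting only the $j$-th coordinate for some $j\in\{3,\ldots,d\}$, reached by sending the $j$-th coordinate to a large value $n-1-p_j$. The constraint $t_1=t_2$ guarantees that these are the only foreign copies meeting the neighbourhood of a base-copy vertex, because any copy that reflects coordinate $1$ must also reflect coordinate $2$, and a single-coordinate change cannot make both $x_1$ and $x_2$ large at once. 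The crux is the counting: one must order the vertices of $B_r^d$ (for instance by sweeping the slices $x_d=i$ as in Lemma~\ref{tupor}, so that each completed slice contributes two extra active neighbours to the next ones, supplemented by the reflected contributions above) and verify that at each step the number of already-active neighbours is at least $r$. The ranges of admissible coordinate shifts are governed by the thresholds recorded in the second paragraph, and the choice $\delta=(d-2)/(d-1)$ is calibrated precisely so that these accumulated neighbour counts reach the threshold $r$ throughout the sweep. The main obstacle is making this count rigorous: the individual contributions involve floors and ceilings, and the vertices lying near the coordinate hyperplanes, where some $x_i$ are very small and a neighbour range is clipped at $0$, have to be treated as boundary cases and checked separately to still meet the threshold $r$. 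Once the count is confirmed, every vertex of $B_r^d$ activates, hence so does all of $\mathcal{B}_r^d$ by symmetry, the closure of $\mathcal{C}_r^d$ contains $\mathcal{A}_r^d$, and Lemma~\ref{tupor} completes the argument.
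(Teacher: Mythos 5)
Your reduction steps are correct and coincide with the paper's opening moves: by monotonicity of the process and Lemma \ref{tupor} it suffices to show that $\mathcal{C}_r^d$ eventually activates all of $\mathcal{B}_r^d$; by the symmetry of $T$ it suffices to treat the base copy $B_r^d$; and your reformulation of the defining inequality of $B_r^d$ as $x_1+x_2<(d-2)(s-1-\sigma)$ is right. But everything after that is a plan rather than a proof, as you concede yourself (``the main obstacle is making this count rigorous''). What you defer is precisely the mathematical content of the lemma. The paper supplies it in two parts: (a) an explicit count of the initially active neighbours of $x\in B_r^d$, namely $\eta_x^1=\eta_x^2=s-\sigma_x-\lceil\delta(s-1-\sigma_x)\rceil$ and $\eta_x^j=2\left(\lfloor(x_1+x_2)/(d-2)\rfloor+1\right)$ for $j\geqslant3$, where $\sigma_x=x_3+\cdots+x_d$, which yields the deficit bound $\eta_x\geqslant r-2(\rho_x+\sigma_x)$ with $\rho_x=\lceil\delta(s-1-\sigma_x)\rceil-(x_1+x_2+1)\geqslant0$; and (b) an induction showing that the missing $2(\rho_x+\sigma_x)$ neighbours are already-activated vertices of $\mathcal{B}_r^d$, exhibited concretely as the sets $P_x$ (increase $x_1$ or $x_2$ by at most $\rho_x$, size $2\rho_x$), $Q_x$ (decrease some $x_i$ with $i\geqslant3$, size $\sigma_x$) and $Q'_x$ (the reflected counterpart of $Q_x$, size $\sigma_x$). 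Neither computation appears in your write-up.

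Moreover, the ordering you propose as a substitute --- sweeping the slices $x_d=i$ ``as in Lemma \ref{tupor}'' --- would not close the gap even if the counts were done, because it does not linearize the dependencies. The helpers in $P_x$ lie in the \emph{same} slice as $x$ (they differ from $x$ only in the first two coordinates), and the helpers in $Q_x$ are obtained by decreasing \emph{any} of the coordinates $3,\ldots,d$, not only the last one; so when processing a slice, a vertex needs neighbours of its own slice that the sweep has not yet certified as active, and the argument stalls. The key missing idea is the paper's potential function $\tau_x=\rho_x+2\sigma_x$: one checks that $\tau_w<\tau_x$ for every $w\in P_x\cup Q_x$ (increasing $x_1+x_2$ lowers $\rho$, and decreasing some $x_i$ by $k$ raises $\rho$ by at most $k$ while lowering $2\sigma$ by $2k$), while $Q'_x$ is absorbed by the symmetry of $\mathcal{B}_r^d$, and the base case $\tau_x=0$ is exactly when $\eta_x\geqslant r$. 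Without this ordering, or an equivalent one, the count you postpone cannot be carried out, so the proposal has a genuine gap at its core.
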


\begin{proof}
By   Lemma \ref{tupor}, it suffices to    prove  that all vertices   in $\mathcal{B}_r^d$ become  activated  in
the $r$-neighbor bootstrap percolation process on $K_n^d$.
Note that once a vertex  in $B_r^d$ becomes activated, the corresponding vertices  in all other  $B_r^d(t)$ become   simultaneously activated, due to symmetry.
So, it is sufficient to show  that
any  vertex  in $B_r^d$ becomes activated  in
the $r$-neighbor bootstrap percolation process on $K_n^d$.
Since    $B_r^2=\varnothing$, we may  assume that   $d\geqslant3$.
Fix   an arbitrary  vertex  $x=(x_1,x_2,\ldots,x_d)\in B_r^d$ and denote by
$\eta_{x}^{i}$,  the number  of   neighbors of $x$ in $\mathcal{C}_r^d$ differing from $x$ in the coordinate $i$. Let $\eta_x=\eta_{x}^1+\cdots+\eta_{x}^d$ and $\sigma_x=x_3+\cdots+x_d$.
It straightforwardly  follows from  the definitions  of   $A_r^d$,   $B_r^d$ and  $\mathcal{C}_r^d$  that $\eta_{x}^1=\eta_{x}^2=s-\sigma_x-\ceil{\delta(s-1-\sigma_x)}$  and  $$\eta_{x}^3=\cdots=\eta_{x}^d=2\left(\left\lfloor\frac{x_1+x_2}{d-2}\right\rfloor+1\right),$$
where $s=\ceil{r/2}$. Therefore,
$$\eta_x=2\left(s-\sigma_x-\left\lceil\delta\left(s-1-\sigma_x\right)\right\rceil\right)+2(d-2)\left(\left\lfloor\frac{x_1+x_2}{d-2}\right\rfloor+1\right).$$
Since $s\geqslant r/2$ and  $$\left\lfloor\frac{x_1+x_2}{d-2}\right\rfloor\geqslant\frac{x_1+x_2-(d-3)}{d-2},$$
we obtain    that $\eta_x\geqslant r-2(\rho_x+\sigma_x)$,
where  $\rho_x=\ceil{\delta(s-1-\sigma_x)}-(x_1+x_2+1)$.
Note that $\rho_x\geqslant0$  in view of   the definition of    $B_r^d$.

We now   prove     by   induction  on    $\tau_x=\rho_x+2\sigma_x$ that any vertex $x\in B_r^d$ becomes activated  in
the $r$-neighbor bootstrap percolation process on $K_n^d$.
If    $\tau_x=0$, then $\rho_x=\sigma_x=0$   and it follows from $\eta_x\geqslant r-2(\rho_x+\sigma_x)$ that     $x$  has at least $r$ activated  neighbors, we are done.
So, we may assume  that $\tau_x\geqslant1$.
In view of  the inequality  $\eta_x\geqslant r-2(\rho_x+\sigma_x)$, it is sufficient to show  that at least $2(\rho_x+\sigma_x)$ neighbors of  $x$ in $\mathcal{B}_r^d$   have  been activated during the previous induction steps.
For this, consider the sets
$$P_x=\bigcup_{i=1}^2\left\{w\in[\![n]\!]^d  \, \left| \,
\begin{array}{ll}
    \text{$x$ and $w$  coincide in all components except the} \\
    \text{$i$th component and $w_i\in\{x_i+1, \ldots, x_i+\rho_x\}$.}
   \end{array}
    \right.
\right\},$$
$$Q_x=\bigcup_{i=3}^d\left\{w\in[\![n]\!]^d  \, \left| \,
\begin{array}{ll}
    \text{$x$ and $w$  coincide in all components except} \\
    \text{the $i$th component and    $w_i\in[\![x_i]\!]$.}
   \end{array}
    \right.
\right\}$$
and
$$Q'_x=\bigcup_{i=3}^d\left\{w\in[\![n]\!]^d  \, \left| \,
\begin{array}{ll}
    \text{$x$ and $w$  coincide in all components except} \\
    \text{the $i$th component and   $n-1-w_i\in[\![x_i]\!]$.}
   \end{array}
    \right.
\right\},$$
where $w=(w_1,\ldots,w_d)$.
Clearly, $P_x\cup Q_x\cup Q'_x\subseteq N(x)\cap\mathcal{B}_r^d$. Further, $\tau_w<\tau_x$  for any vertex $w\in P_x\cup Q_x$.  Therefore,   by the induction hypothesis and  the symmetry of   $\mathcal{B}_r^d$, we deduce that  $P_x\cup Q_x\cup Q'_x$ is a   set of activated vertices   of size $2(\rho_x+\sigma_x)$. Thus,  $x$ becomes activated, as required.
\end{proof}

We need the following theorem  in order to prove our result about the upper bound on $m(K_n^d, r)$.

\begin{theorem}[Beged-Dov  \cite{bd}]\label{bdthm}
Let $a_1,\ldots, a_k, b$  be positive   numbers with  $b\geqslant\min\{a_1, \ldots, a_k\}$ and  let  $N$ be the number of  solutions of    $a_1x_1+\cdots+ a_kx_k\leqslant b$ for the  nonnegative integers $x_1, \ldots, x_k$. Then $$\frac{b^k}{k!a_1\cdots a_k}\leqslant N\leqslant\frac{(a_1+\cdots+a_k+b)^k}{k!a_1\cdots a_k}.$$
\end{theorem}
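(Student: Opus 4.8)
The plan is to read $N$ geometrically as the number of integer points in the weighted simplex
$$S_b=\left\{x\in\mathbbmsl{R}^k \, \left| \, x_i\geqslant0 \text{ for all } i \text{ and } \sum_{i=1}^k a_ix_i\leqslant b\right.\right\}$$
and to trap this count between the volumes of $S_b$ and of a slightly enlarged simplex via a unit-box comparison. The two target expressions are exactly such volumes: the linear substitution $u_i=a_ix_i$ has Jacobian $a_1\cdots a_k$ and maps $S_b$ onto the standard simplex $\{u\geqslant0,\ \sum_i u_i\leqslant b\}$ of volume $b^k/k!$, so that $\operatorname{vol}(S_b)=b^k/(k!\,a_1\cdots a_k)$, and likewise $\operatorname{vol}\!\big(S_{b+a_1+\cdots+a_k}\big)=(a_1+\cdots+a_k+b)^k/(k!\,a_1\cdots a_k)$. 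Thus it suffices to prove $\operatorname{vol}(S_b)\leqslant N\leqslant\operatorname{vol}\!\big(S_{b+a_1+\cdots+a_k}\big)$.

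For the lower bound I would attach to each counted integer point $p=(p_1,\ldots,p_k)$ the half-open unit cube $C_p=[p_1,p_1+1)\times\cdots\times[p_k,p_k+1)$. These cubes are pairwise disjoint and each has volume $1$, so it is enough to check that their union covers $S_b$. Given any $y\in S_b$, put $p_i=\lfloor y_i\rfloor$; then $p_i\geqslant0$, and since every $a_i>0$ with $p_i\leqslant y_i$ we get $\sum_i a_ip_i\leqslant\sum_i a_iy_i\leqslant b$, so $p$ is one of the counted points and $y\in C_p$. Hence $S_b\subseteq\bigcup_p C_p$, giving $\operatorname{vol}(S_b)\leqslant N$.

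For the upper bound I would reuse the same cubes and show that they all lie inside the enlarged simplex $S_{b+a_1+\cdots+a_k}$. If $y\in C_p$ then $0\leqslant p_i\leqslant y_i<p_i+1$, whence $\sum_i a_iy_i<\sum_i a_i(p_i+1)=\sum_i a_ip_i+\sum_i a_i\leqslant b+\sum_i a_i$; therefore $C_p\subseteq S_{b+a_1+\cdots+a_k}$. Since the $N$ cubes are disjoint and of volume $1$, summing their volumes yields $N\leqslant\operatorname{vol}\!\big(S_{b+a_1+\cdots+a_k}\big)$, which is the claimed upper bound.

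The whole argument is elementary, so I do not anticipate a genuine obstacle; the only points that demand care are the volume formula for the weighted simplex and the bookkeeping with half-open cubes, which must simultaneously guarantee exact disjointness and the covering (for the lower bound) and containment (for the upper bound) claims. I would also remark that the hypothesis $b\geqslant\min\{a_1,\ldots,a_k\}$ is not actually needed for this box-counting proof---both inequalities hold for every positive $b$---and is presumably retained only for the intended application.
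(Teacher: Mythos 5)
Your argument is correct and complete, but there is nothing in the paper to compare it against: Theorem~\ref{bdthm} is an external result, attributed to Beged-Dov and used as a black box, and the paper supplies no proof of it (the reader is referred to the cited SIAM paper). Your proof is the standard lattice-point-versus-volume comparison, and every step checks out: the substitution $u_i=a_ix_i$ correctly gives $\operatorname{vol}(S_b)=b^k/(k!\,a_1\cdots a_k)$; the half-open unit cubes anchored at the counted integer points are pairwise disjoint, they cover $S_b$ (flooring the coordinates of any $y\in S_b$ yields a counted point precisely because each $a_i>0$), and each such cube lies in $S_{b+a_1+\cdots+a_k}$ (each coordinate of a point of $C_p$ exceeds $p_i$ by less than $1$, so the form $\sum_i a_ix_i$ increases by less than $\sum_i a_i$). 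Comparing volumes then gives both inequalities. Your closing remark is also accurate: the hypothesis $b\geqslant\min\{a_1,\ldots,a_k\}$ is never invoked in this argument, so it is an artifact of the cited formulation rather than a necessity; it is harmless for the paper's application, where the theorem is applied to $|A_r^d|$ and $|B_r^d|$. In short, you have given a correct, elementary, self-contained proof of a statement the paper merely quotes, which is a strictly stronger deliverable than what the paper itself provides for this theorem.
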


\begin{theorem}
Let $n, r, d$ be      positive integers  with  $n\geqslant r+1$ and $d\geqslant2$.
Then $$\frac{1}{r}{{d+r}\choose{d+1}}\leqslant m\!\left(K_n^d, r\right)\leqslant\frac{(r+2d-1)^d-\delta^2(r-2)^d}{2d!}.$$
\end{theorem}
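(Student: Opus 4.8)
The plan is to treat the two bounds separately, and the lower bound comes essentially for free from the previous section. Since $m_e(G,r)\leqslant r\,m(G,r)$ and Theorem \ref{lbknd} gives $m_e(K_n^d,r)=\binom{d+r}{d+1}$, combining the two yields $\binom{d+r}{d+1}\leqslant r\,m(K_n^d,r)$, that is, $m(K_n^d,r)\geqslant\frac1r\binom{d+r}{d+1}$. Nothing further is required for the left-hand inequality.

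For the upper bound the idea is to invoke the explicit percolating set already built: by Lemma \ref{tukhali} the set $\mathcal{C}_r^d$ percolates, so $m(K_n^d,r)\leqslant|\mathcal{C}_r^d|$, and what remains is pure counting. Because $\mathcal{C}_r^d=\bigcup_{t\in T}C_r^d(t)$ is a union of $|T|=2^{d-1}$ reflected copies of $C_r^d$, subadditivity gives $|\mathcal{C}_r^d|\leqslant 2^{d-1}|C_r^d|$. I would first verify that $B_r^d\subseteq A_r^d$ (trivial for $d=2$, where $B_r^2=\varnothing$; for $d\geqslant3$ one checks that $x\in B_r^d$ forces $\sum_i x_i<s-1$ using $0\leqslant\delta<1$ and $\sigma=x_3+\cdots+x_d<s-1$), so that $|C_r^d|=|A_r^d|-|B_r^d|$. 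This reduces everything to an \emph{upper} estimate on $|A_r^d|$ and a \emph{lower} estimate on $|B_r^d|$, both being lattice-point counts of exactly the kind handled by Beged-Dov's theorem (Theorem \ref{bdthm}).

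The final step feeds these two counts into Theorem \ref{bdthm}. Writing $s=\ceil{r/2}$, the region $A_r^d$ is the solution set of $x_1+\cdots+x_d\leqslant s-1$ with unit coefficients, so the upper half of Theorem \ref{bdthm} gives $|A_r^d|\leqslant(d+s-1)^d/d!\leqslant(r+2d-1)^d/(2^d d!)$, using $s\leqslant(r+1)/2$; multiplying by $2^{d-1}$ produces precisely the first term $(r+2d-1)^d/(2d!)$. For $B_r^d$, whose defining inequality $x_1+x_2+\delta\sum_{i\geqslant3}x_i<\delta(s-1)$ has coefficients $1,1,\delta,\ldots,\delta$, the lower half of Theorem \ref{bdthm} with bound $\delta(r-2)/2$ yields $|B_r^d|\geqslant(\delta(r-2)/2)^d/(d!\,\delta^{d-2})=\delta^2(r-2)^d/(2^d d!)$, and multiplying by $2^{d-1}$ gives the second term $\delta^2(r-2)^d/(2d!)$. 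Subtracting the two contributions delivers the claimed right-hand bound.

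The main obstacle I expect is the bookkeeping around the strict inequality and the ceilings rather than anything conceptual. The delicate point is justifying the lower bound with bound exactly $\delta(r-2)/2$ even though the inequality defining $B_r^d$ is strict and $\delta(s-1)$ equals $\delta(r-2)/2$ when $r$ is even. The clean way is to observe that every $b<\delta(s-1)$ satisfies the hypotheses of Theorem \ref{bdthm}, so the fixed integer $|B_r^d|$ dominates $b^d/(d!\,\delta^{d-2})$ for all such $b$; letting $b\uparrow\delta(r-2)/2$ (which is permissible since $\delta(r-2)/2\leqslant\delta(s-1)$) forces $|B_r^d|\geqslant\delta^2(r-2)^d/(2^d d!)$. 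One must also keep $r$ large enough that $\delta(r-2)/2\geqslant\delta$, the minimum of the coefficients, as Theorem \ref{bdthm} requires; the finitely many remaining small cases are checked directly.
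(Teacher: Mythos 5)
Your proposal is correct and follows essentially the same route as the paper: the lower bound via Theorem \ref{lbknd} combined with $m_e(G,r)\leqslant r\,m(G,r)$, and the upper bound by counting $|\mathcal{C}_r^d|\leqslant 2^{d-1}\bigl(|A_r^d|-|B_r^d|\bigr)$ using Beged-Dov's estimates on both regions. Your extra care with the strict inequality defining $B_r^d$ and with the hypothesis $b\geqslant\min\{a_1,\ldots,a_k\}$ of Theorem \ref{bdthm} addresses details the paper's proof glosses over, but the argument is the same.
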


\begin{proof}
The lower bound is obtained  from Theorem \ref{lbknd} and the fact  that  $m_e(G,r)\leqslant rm(G,r)$.
For the upper bound, note that $\mathcal{C}_r^d$ is a percolating set in the $r$-neighbor bootstrap percolation process on $K_n^d$ by Lemma \ref{tukhali}. It follows from  $B_r^d\subseteq A_r^d$  and  Theorem \ref{bdthm}  that
\begin{align*}
\left|C_r^d\right|&=\left|A_r^d\right|-\left|B_r^d\right|\\&
\leqslant\frac{\left(d+\left\lceil\tfrac{r}{2}\right\rceil-1\right)^d}{d!}-\frac{\left(\delta\!\left(\left\lceil\tfrac{r}{2}\right\rceil-1\right)\right)^d}{d!\delta^{d-2}}\\&
\leqslant\frac{(r+2d-1)^d-\delta^2(r-2)^d}{2^dd!}.
\end{align*}
As $|T|=2^{d-1}$, we have
$$\left|\mathcal{C}_r^d\right|\leqslant\sum_{t\in T}\left|C_r^d(t)\right|\leqslant\frac{(r+2d-1)^d-\delta^2(r-2)^d}{2d!}.$$
This proves the upper bound.
\end{proof}

\begin{cor}
Let $r \rightarrow \infty$, $n\geqslant r+1$ and $d=o(\!\sqrt{r})$. Then
$$\frac{r^d}{(d+1)!}\big(1+o(1)\big)\leqslant m\!\left(K_n^d, r\right)\leqslant\frac{r^d(2d-3)}{2d!(d-1)^2}\big(1+o(1)\big).$$
In particular, if in addition $d\rightarrow \infty$, then $m(K_n^d, r)=\frac{1+o(1)}{(d+1)!}r^d.$
\end{cor}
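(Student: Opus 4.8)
The plan is to obtain both asymptotic estimates by simplifying the two explicit bounds of the preceding theorem and then to observe that their leading coefficients coincide as $d\to\infty$. For the lower bound I would rewrite $\frac{1}{r}\binom{d+r}{d+1}=\frac{1}{(d+1)!}\prod_{j=1}^{d}(r+j)=\frac{r^d}{(d+1)!}\prod_{j=1}^{d}\bigl(1+\tfrac{j}{r}\bigr)$ and control the product through its logarithm: since $\sum_{j=1}^{d}\log(1+j/r)\leqslant\sum_{j=1}^{d}j/r=\tfrac{d(d+1)}{2r}$, the hypothesis $d=o(\sqrt{r})$ forces this sum to be $o(1)$, and as every factor is at least $1$ we get $\prod_{j=1}^{d}(1+j/r)=1+o(1)$, whence the lower bound equals $\tfrac{r^d}{(d+1)!}(1+o(1))$. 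This direction is routine.

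For the upper bound I would factor $r^d$ out of the theorem's expression, writing its numerator as $r^d\bigl[(1+\tfrac{2d-1}{r})^d-\delta^2(1-\tfrac{2}{r})^d\bigr]$, and note that $1-\delta^2=\tfrac{2d-3}{(d-1)^2}$, so the target coefficient $\tfrac{2d-3}{2d!(d-1)^2}$ is exactly $\tfrac{1-\delta^2}{2d!}$. The crux of the statement lies precisely here: because $1-\delta^2\to0$ as $d$ grows, the bracket is a near-cancellation of two quantities each close to $1$, so I cannot merely invoke $(1+\tfrac{2d-1}{r})^d=1+o(1)$ and $(1-\tfrac{2}{r})^d=1+o(1)$ and absorb the errors into a single multiplicative $(1+o(1))$. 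Instead I must show that the additive discrepancy $\bigl[(1+\tfrac{2d-1}{r})^d-1\bigr]-\delta^2\bigl[(1-\tfrac{2}{r})^d-1\bigr]$, whose leading term is of order $d^2/r$, is in fact $o(1-\delta^2)=o(1/d)$; this is the delicate point where the joint growth of $r$ and $d$ must be weighed against the vanishing gap $1-\delta^2$, and where a cruder use of the bound would lose a factor. I expect this cancellation control to be the main obstacle. Once it is secured the bracket equals $(1-\delta^2)(1+o(1))$ and the upper bound becomes $\tfrac{r^d(2d-3)}{2d!(d-1)^2}(1+o(1))$; should the theorem's closed form prove too lossy, one could instead estimate $|\mathcal{C}_r^d|=|\mathcal{A}_r^d|-|\mathcal{B}_r^d|$ directly, arguing that the lattice-point corrections of $\mathcal{A}_r^d$ and $\mathcal{B}_r^d$ agree to the required relative precision and hence cancel.

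Finally, for the concluding clause I would compare the two coefficients head on: their ratio is $\tfrac{(2d-3)(d+1)}{2(d-1)^2}=\tfrac{2d^2-d-3}{2d^2-4d+2}$, which tends to $1$ as $d\to\infty$. Consequently, when $d\to\infty$ both the lower and the upper bound are $\tfrac{r^d}{(d+1)!}(1+o(1))$, and the sandwich forces $m(K_n^d,r)=\tfrac{1+o(1)}{(d+1)!}r^d$, as claimed.
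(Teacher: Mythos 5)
Your lower bound and the closing ratio computation are correct and complete. The gap is in the upper bound, and it sits exactly where you predicted: the step you call ``the main obstacle'' and then defer (``once it is secured'') is not securable by your primary route, so as written the proposal proves the upper bound only under the stronger hypothesis $d=o(r^{1/3})$. Concretely, the discrepancy you isolate satisfies $\bigl(1+\tfrac{2d-1}{r}\bigr)^d-1\geqslant\tfrac{d(2d-1)}{r}$, so it is of order at least $d^2/r$, while $1-\delta^2=\tfrac{2d-3}{(d-1)^2}=\Theta(1/d)$; the requirement $d^2/r=o(1/d)$ thus amounts to $d^3=o(r)$, which $d=o(\sqrt{r})$ does not give. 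In the regime $r^{1/3}\ll d=o(r^{1/2})$ (say $d\sim r^{2/5}$, which forces $d\to\infty$) the theorem's closed-form bound exceeds the corollary's claimed bound by a factor $1+\Theta(d^3/r)\to\infty$, so no asymptotic manipulation of that closed form can yield the corollary there: it is genuinely too lossy, and your fallback is not a contingency but the only viable route. (You are in good company: the paper states the corollary without any proof, as if it followed by direct simplification of the theorem, so its implicit argument has precisely the defect you identified; your diagnosis is sharper than the source. But a diagnosis is not a proof.)

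What remains is that the fallback is a one-clause sketch at exactly the crux. It does work under the full hypothesis $d=o(\sqrt{r})$, because exact lattice-point counts avoid the loss built into Beged-Dov's upper bound, whose $(a_1+\cdots+a_k+b)^k$ numerator costs a factor $e^{\Theta(d^2/r)}$ --- and that factor is what swamps $1-\delta^2$. For $d\geqslant3$ (bounded $d$ is already covered by your primary route), put $s=\lceil r/2\rceil$ and slice $C_r^d=A_r^d\setminus B_r^d$ by $\sigma=x_3+\cdots+x_d$: writing $m=s-1-\sigma$, the number of points of $C_r^d$ in a slice is $\binom{\sigma+d-3}{d-3}\bigl[\binom{m+2}{2}-\binom{\lceil\delta m\rceil+1}{2}\bigr]$, and since $\binom{\lceil\delta m\rceil+1}{2}\geqslant\tfrac{\delta^2m^2+\delta m}{2}$ the bracket is at most $\tfrac{(1-\delta^2)m^2}{2}+O(m+1)$ with absolute implied constants. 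Summing over the slices via $\sum_{\sigma+m=s-1}\binom{\sigma+d-3}{d-3}\binom{m+2}{2}=\binom{s+d-1}{d}$ and its lower-order analogues, the main terms give $(1-\delta^2)\binom{s+d-1}{d}$ while the $O(m+1)$ corrections total $O\bigl(\binom{s+d-2}{d-1}\bigr)$; since $\binom{s+d-2}{d-1}\big/\binom{s+d-1}{d}\leqslant d/s$, the corrections are a fraction $\Theta(d\cdot d/s)=\Theta(d^2/r)=o(1)$ of the main term. Hence $|C_r^d|\leqslant(1-\delta^2)\binom{s+d-1}{d}(1+o(1))$, and using $\binom{s+d-1}{d}\leqslant\tfrac{s^d}{d!}e^{d^2/(2s)}=\tfrac{s^d}{d!}(1+o(1))$, $(2s)^d\leqslant(r+1)^d=r^d(1+o(1))$ and $|T|=2^{d-1}$, one gets $|\mathcal{C}_r^d|\leqslant\tfrac{(2d-3)r^d}{2(d-1)^2d!}(1+o(1))$, as required. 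With this slice-and-sum computation inserted your argument closes; without it, the key inequality is asserted rather than proved.
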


\section{Line Graphs}

The {\sl line graph} of a graph $G$, written $L(G)$, is the graph whose vertex set is    $E(G)$ and two vertices are adjacent if they share an endpoint. We determined $m(K_n^2,r)$ in Section \ref{sec2}.
One may think of $K_n^2$ as the line graph of $K_{n,n}$, the complete bipartite graph with parts of size $n$. Inspired by this observation, we study $m(L(K_n),r)$, where $L(K_n)$ is the line graph of the complete graph on $n$ vertices. Note that the $r$-neighbor bootstrap percolation on $L(K_n)$ can be viewed
as an {\sl edge percolation process} on $K_n$ and so it is somehow similar to the $S_{r+1}$-bootstrap percolation on $K_n$. In the former, an edge of $K_n$  becomes activated if the number of activated edges incident with  either of its end points is at least $r$ while in the latter, an edge of $K_n$ becomes activated when there are at least $r$ activated edges all incident with one of its end points.

By Theorem \ref{lbknd}, $m_e(K_n,r)=\binom{r+1}{2}$ for $n\geqslant r+1$ which resolves the minimum size of a percolating set in the $S_{r+1}$-bootstrap percolation on $K_n$. In this section, we compute $m(L(K_n),r)$ using our interpretation of the $r$-neighbor bootstrap percolation on $L(K_n)$ as the edge percolation process on $K_n$.
Note that $L(K_n)$ is a $(2n-4)$-regular graph and so if $n\leqslant \lceil{\frac{r}{2}\rceil}+1$, no percolation occurs in $L(K_n)$, implying $m\big (L(K_n),r\big )=\binom{n}{2}$. Hence, the problem is interesting only for $n\geqslant \lceil{\frac{r}{2}\rceil}+2$.

To obtain an upper bound, we
introduce a subset of $E(K_n)$ of size $\lfloor (r+2)^2/8 \rfloor$ whose activation leads to the activation of $E(K_n)$ in the  edge percolation process on $K_n$.
	
\begin{definition}
		Let $r,n$ be nonnegative integers with $n\geqslant \lceil{\frac{r}{2}\rceil}+2$. Define the graph $G^n_{r}$ as follows.
   Let  $[\![n]\!]$ be the vertex set  and
  for $i=0,\ldots,\lceil{r/2}\rceil-1$,  connect $i$ to the last $\lceil{r/2}\rceil-i$ vertices. If $r$ is even, then also connect $n-3+2j-r/2$ to  $n-2+2j-r/2$ for $1\leqslant j \leqslant \lceil{r/4\rceil}$.
	\end{definition}		
		
The condition $n\geqslant \lceil{\frac{r}{2}\rceil}+2$ ensures that  $G^n_{r}$ is a  simple graph with
$$\big|E  (G^n_{r} )\big|=\sum_{i=0}^{\lceil{\frac{r}{2}\rceil}-1}\left(\Big\lceil\frac{r}{2}\Big\rceil-i\right)+\epsilon\sum_{j=1}^{\lceil{\frac {r}{4}\rceil}}1 =\binom{\lceil{\frac{r}{2}\rceil+1}}{2}+\epsilon\Big\lceil \frac r4\Big\rceil= \Big\lfloor{ \frac{(r+2)^2}{8}}\Big\rfloor,$$
where $\epsilon=1$ if $r$ is even and $0$ otherwise.

\begin{lemma}\label{upperline} If $n\geqslant \lceil{\frac{r}{2}\rceil}+2$, then $m(L(K_n),r)\leqslant \big\lfloor{ {(r+2)^2}/{8}}\big\rfloor$.
	\end{lemma}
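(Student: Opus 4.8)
The plan is to use the reformulation, stated just above the lemma, of the $r$-neighbor bootstrap percolation on $L(K_n)$ as an edge percolation process on $K_n$: an inactive edge $uv$ turns active exactly when $\deg_A(u)+\deg_A(v)\geqslant r$, where $\deg_A(w)$ is the number of currently active edges incident with $w$ (the edge $uv$ is the only edge meeting both $u$ and $v$, so there is no overcounting). Since $|E(G^n_r)|=\lfloor(r+2)^2/8\rfloor$ has already been computed, it suffices to prove that starting the process from the active set $E(G^n_r)$ activates every edge of $K_n$. Because bootstrap percolation is monotone, I only need to produce \emph{one} ordering of the missing edges in which each edge, at the moment it is processed, already satisfies the degree-sum condition at its endpoints; the true parallel process then percolates as well.

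Write $s=\lceil r/2\rceil$, so $r\in\{2s-1,2s\}$ and in particular $2s\geqslant r$. The first step is a clean endgame reduction: if the process ever reaches a configuration in which \emph{every} vertex has active degree at least $s$, then every inactive edge $uv$ has $\deg_A(u)+\deg_A(v)\geqslant 2s\geqslant r$, so all remaining edges activate simultaneously and percolation is complete. This reduces the whole lemma to showing that the initial configuration $E(G^n_r)$ can be driven, by a sequence of legal activations, to one in which the minimum active degree is at least $s$.

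For that cascade I would track the active degrees explicitly. The staircase part of $G^n_r$ gives vertex $i$ the degree $s-i$ near each end, and it is engineered so that consecutive new edges can be switched on with their endpoint-degree-sums landing on the threshold. For odd $r=2s-1$ the vertices $0,\ldots,s-1$ carry degrees $s,s-1,\ldots,1$, and one activates the missing edges among them in the order that grows a clique: when the next vertex is attached to a clique on $\{0,\ldots,k\}$, its first new edge meets a clique vertex of degree $s+k$ while the newcomer has degree $s-k-1$, for a sum of exactly $2s-1=r$, and the same computation shows every later edge of that round is also legal. Once the clique reaches $\{0,\ldots,s-1\}$ these $s$ vertices have degrees $r,r-1,\ldots,s$, and I can then absorb every remaining vertex $x$ by joining it to those $s$ vertices in decreasing-degree order, raising $\deg_A(x)$ to $s$ while each edge meets the threshold; this brings the minimum degree up to $s$ and triggers the endgame. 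For even $r=2s$ the analogous sums begin one short of $r$, and this unit deficit is precisely what the extra matching of $\lceil r/4\rceil$ edges near the high end repairs: it supplies the one additional active edge per vertex that ignites the same clique-growth argument, now started among the boosted high-index vertices.

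I expect the principal obstacle to be the bookkeeping of this cascade in the regime where $r$ is close to its maximum $2(n-1)$, that is, where $s$ is close to $n-2$. There the two staircases overlap, the clean ``degree $s-i$'' accounting must be redone, and, more seriously, no vertex can reach active degree $r$ in isolation, since $n-1<r$; the activation order must therefore raise all degrees to $s$ through the coupled degree-sum conditions while respecting the hard cap $\deg_A(w)\leqslant n-1$, so that a vertex which becomes fully joined early is simply dropped from the pool of targets. Verifying that a legal order survives in this tight regime, together with the clean separation into the odd case and the parity-corrected even case, is where the real work lies; the endgame reduction and the generic non-overlapping cascade are comparatively routine.
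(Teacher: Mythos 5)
Your reformulation of the process, your count $|E(G^n_r)|=\lfloor (r+2)^2/8\rfloor$, and your endgame reduction (once every vertex has active degree at least $s=\lceil r/2\rceil$, every remaining edge sees degree sum at least $2s\geqslant r$) are all sound, and for odd $r$ your clique-growth cascade does check out --- but only when the two staircases of $G^n_r$ are disjoint, i.e.\ when $n\geqslant r+1$. That caveat is not a technicality you can defer: your order requires vertex $0$ to climb to active degree $s+k$ as the clique on $\{0,\dots,k\}$ grows, ending at degree $r$, and when $\lceil r/2\rceil+2\leqslant n\leqslant r$ (a nonempty range for every $r\geqslant 7$, containing about $r/2$ values of $n$) this is flatly impossible because no vertex of $K_n$ has degree above $n-1<r$. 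You name this obstacle yourself, but naming it is not overcoming it: in that regime the activation order you propose does not exist, and you offer no substitute. The even case is likewise only gestured at (``ignites the same clique-growth argument \dots among the boosted high-index vertices'') with none of the degree bookkeeping that would make it verifiable. So the proposal, as it stands, proves the lemma only for $n\geqslant r+1$ and has a genuine gap over the rest of the stated range.

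The missing idea is the induction on $r$ that the paper uses, which never asks any vertex except one to acquire a large degree and therefore survives the tight regime. The graph $G^n_r$ is engineered so that its induced subgraph on $[\![n-1]\!]$ is exactly $G^{n-1}_{r-2}$. One first saturates the single vertex $n-1$: its non-neighbors in $G^n_r$ are $\{\lceil r/2\rceil,\dots,n-2-\epsilon\}$ (with $\epsilon\in\{0,1\}$ recording whether the matching touches $n-1$), and $\deg(n-1)+\deg(n-2-\epsilon)=r$ exactly; activating that edge raises $\deg(n-1)$ by one while the next target's degree is one lower, so the sum stays pinned at $r$ and all edges at $n-1$ activate one after another. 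Crucially, the threshold here is always met by a \emph{sum} in which the partner vertex contributes only about $r/2$, so the cap $n-1<r$ on individual degrees is irrelevant. Once $n-1$ is saturated, every vertex of $[\![n-1]\!]$ owns one active edge to $n-1$, so completing the percolation on $K_n$ reduces to the edge process with threshold $r-2$ on $K_{n-1}$ started from $E(G^{n-1}_{r-2})$, and induction on $r$ (base cases $r=0,1$ trivial) finishes, handling both parities and all $n\geqslant\lceil r/2\rceil+2$ uniformly. To salvage your approach you would need a cascade in which degree sums, never individual degrees, carry the threshold --- which is effectively what this saturate-and-recurse argument is.
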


\begin{proof}
    We show that the activation  of $E\big (G^n_{r}\big )$  leads to the activation of $E(K_n)$ in the  edge percolation process on $K_n$. From the definition, we see that the subgraph of $G^n_{r}$ induced on $[\![n-1]\!]$ is $G^{n-1}_{r-2}$. This proposes to  use an induction argument on $r$. The assertion trivially holds for $r=0,1$. Assume that $r\geqslant 2$. By the definition and  some calculations, one can find that $\deg(n-i)=\max \{ 0, \lfloor \frac r2 \rfloor-i+2 \}$ for $i=2, \ldots, n-\lceil \frac r2 \rceil$. Also, $\deg(n-1)=\lceil \frac r2 \rceil+\epsilon$, where $\epsilon$ is 1 if $r\equiv 2 \pmod{4}$ and 0, otherwise. Note that the set of vertices of $G^n_{r}$ which are not adjacent to $n-1$ is $\{ \lceil \frac r2 \rceil ,\ldots,  n-2-\epsilon \}$. As $\deg(n-1)+\deg(n-2-\epsilon)=\lceil \frac r2 \rceil+\epsilon + \lfloor \frac r2 \rfloor - \epsilon=r$, the edge between $n-1$ and $n-2-\epsilon$ becomes activated and the degree of $n-1$ increases by $1$. Using the same argument, the edge between $n-1$ and $n-3-\epsilon$ becomes activated and so on. Once every edge incident to $n-1$ percolates, we may omit $n-1$ and consider the subgraph of $G^n_{r}$ induced on $[\![n-1]\!]$ which is $G^{n-1}_{r-2}$. As each end point of every edge in this graph is adjacent to $n-1$ through an activated edge, we may consider the edge bootstrap percolation process with parameter  $r-2$ on $K_{n-1}$. The hypothesis of the induction implies that
    the activation  of $E\big (G^{n-1}_{r-2}\big )$  leads to the activation of $E(K_{n-1})$,  completing  the proof.
\end{proof}

We next  find a lower bound on $m(L(K_n),r)$.
\begin{lemma}\label{lowerline} If $n\geqslant \lceil{\frac{r}{2}\rceil}+2$, then $m(L(K_n),r)\geqslant \big\lfloor{ {(r+2)^2}/{8}}\big\rfloor$.
	\end{lemma}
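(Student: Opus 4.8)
The plan is to argue entirely in the edge percolation reformulation on $K_n$ described above, in which an edge $\{u,v\}$ becomes active exactly when $\deg(u)+\deg(v)\geqslant r$ in the graph of currently active edges. I would prove $m(L(K_n),r)\geqslant\lfloor (r+2)^2/8\rfloor$ by induction on $r$, \emph{decreasing $r$ by four} at each step, with base cases $r\in\{0,1,2,3\}$. The base cases are short: for $r=0,1$ the claimed values $0,1$ hold since a nonempty active set is needed to begin, and for $r=2,3$ one checks by hand that no configuration of at most two edges can propagate (a single edge, a path on three vertices, or a matching of two edges all get stuck), which forces the values $2$ and $3$.

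For the inductive step take $r\geqslant4$ and let $A$ be a percolating set. We may assume $A\subsetneq E(K_n)$, since otherwise $|A|=\binom{n}{2}\geqslant\lfloor (r+2)^2/8\rfloor$ for $n\geqslant\lceil r/2\rceil+2$. Then the process activates at least one new edge; let $e=\{u,v\}$ be the first such edge. Because $e$ is activated directly from the initial configuration $A$, its endpoints satisfy $\deg_A(u)+\deg_A(v)\geqslant r$, where these degrees count precisely the edges of $A$ incident to $u$ or to $v$ (the edge $\{u,v\}$ itself lies outside $A$). Since the stars at $u$ and at $v$ meet only in $\{u,v\}\notin A$, this shows $A$ has at least $r$ edges meeting $\{u,v\}$.

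The crux is a faithful reduction: I claim $A'=A\cap E(K_{n-2})$, the edges of $A$ avoiding both $u$ and $v$, is a percolating set of $K_{n-2}$ for the parameter $r-4$. To see this, enlarge $A$ to $A^{*}=A'\cup\{\text{all edges incident to $u$ or $v$}\}$; as $A^{*}\supseteq A$, it still percolates $K_n$. In the $r$-process started from $A^{*}$ every edge meeting $u$ or $v$ is active from the outset, and every edge of $K_{n-2}$ has exactly four neighbours outside $K_{n-2}$, namely its edges to $u$ and to $v$, all active. Hence the restriction of this process to $E(K_{n-2})$ is exactly the $(r-4)$-process on $K_{n-2}$ started from $A'$; since $A^{*}$ fills $K_n$, it fills $E(K_{n-2})$, proving the claim. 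The hypothesis $n\geqslant\lceil r/2\rceil+2$ yields $n-2\geqslant\lceil (r-4)/2\rceil+2$, exactly what is needed to invoke induction.

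Combining, the parts $A\cap(\text{star of }u)$, $A\cap(\text{star of }v)$ and $A'$ partition $A$, so
$$|A|=\deg_A(u)+\deg_A(v)+|A'|\geqslant r+\Big\lfloor\tfrac{(r-2)^2}{8}\Big\rfloor=\Big\lfloor\tfrac{(r+2)^2}{8}\Big\rfloor,$$
using $|A'|\geqslant m(L(K_{n-2}),r-4)=\lfloor (r-2)^2/8\rfloor$ and the identity $\lfloor (r+2)^2/8\rfloor-\lfloor (r-2)^2/8\rfloor=r$, which holds because $(r+2)^2\equiv(r-2)^2\pmod 8$. I expect the main obstacle to be the faithful-reduction step, and in particular justifying the decision to peel \emph{two} vertices and drop $r$ by four rather than one vertex dropping $r$ by two: the one-vertex peel fails, since a percolating set need not contain any vertex of the degree $\approx r/2+1$ it would require when $r\equiv2\pmod 4$ (already at $r=2$ a matching of two disjoint edges percolates yet has maximum degree one). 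Peeling the two endpoints of the first activated edge is precisely what makes the accounting $\deg_A(u)+\deg_A(v)\geqslant r$ match the clean recursion $f(r)=r+f(r-4)$.
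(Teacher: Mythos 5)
Your proof is correct, and it takes a genuinely different route from the paper's. The paper gives a direct, non-inductive counting argument: fix a \emph{minimum} percolating set $A$, order the edges activated outside $A$, and extract a maximal subsequence $e_{i_0},\ldots,e_{i_{k-1}}$ of pairwise vertex-disjoint edges. Since each $e_{i_j}=x_jy_j$ already sees $r$ active edges at its endpoints when it fires, at most $2n-3-r$ of the activations from $e_{i_j}$ onward can meet $\{x_j,y_j\}$; combining this with $t\geqslant\binom{n}{2}-\lfloor(r+2)^2/8\rfloor$, which comes from the upper bound of Lemma \ref{upperline} and minimality of $A$, forces $k>\lfloor r/4\rfloor$. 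Moreover, at most $2j$ previously activated edges meet each of $x_j$ and $y_j$, so at least $r-4j$ edges of $A$ meet $\{x_j,y_j\}$, and these edge sets are pairwise disjoint; summing gives $|A|\geqslant\sum_{j=0}^{\lfloor r/4\rfloor}(r-4j)=\lfloor(r+2)^2/8\rfloor$. Your induction replaces this global count with a local peeling of the two endpoints of the first activated edge and a faithful reduction to the $(r-4)$-process on $K_{n-2}$ --- the same device the paper itself uses for Theorem \ref{dim2} and for the upper bound in Lemma \ref{upperline}, but which the paper does not use here. Both proofs hinge on the same two facts (the first activated edge sees $r$ edges of $A$ at its endpoints; vertex-disjointness makes the counts add), packaged differently: your argument is self-contained and in particular does not consume the upper bound of Lemma \ref{upperline} as an input, whereas the paper's lower-bound proof does; in exchange, the paper's one-pass argument needs no base cases and no reduction lemma. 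One slip of phrasing in your base cases: for $r=2$ a two-edge matching does \emph{not} get stuck (it percolates), so the blanket claim that no configuration of at most two edges propagates is wrong as stated for $r=2$; what your bound actually needs there --- and what your checks do establish --- is only that a single edge is stuck when $r=2$, and that a two-edge path or matching is stuck when $r=3$.
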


\begin{proof}
 Fix positive integers $r,n$ with  $n\geqslant \lceil{\frac{r}{2}\rceil}+2$. Let $A\subset E(K_n)$ be a minimum size set whose activation leads to the activation of $E(K_n)$ in the  edge percolation process on $K_n$. Lemma \ref{upperline} implies that $|A|\leqslant\lfloor {(r+2)^2}/{8}\rfloor$. Let ${e}=(e_0,e_1,\ldots,e_{t-1})$ be an order in which the edges of  $E(G)\setminus A$ become activated, where $t=\binom{n}{2}-|A|$.
We find a maximal subsequence ${f}=(e_{i_0},e_{i_1},\ldots,e_{i_{k-1}})$ of ${e}$ as follows. Let $e_{i_0}=e_0$. If $e_{i_0},\ldots,e_{i_{j-1}}$ are chosen,  then let $e_{i_{j}}$ be the first edge in ${e}$ after $e_{i_{j-1}}$ which is independent from $e_{i_0},\ldots,e_{i_{j-1}}$.

We show that  $k> \lfloor r/4\rfloor$. To prove it, we find an upper bound on $t$. First note that by the definition of   ${f}$,  every edge in  $e$ is incident with some edge in $f$.
Assume that $e_{i_{j}}=x_jy_j$ for $0\leqslant j\leqslant k-1$. Since  $e_{i_{j}}$ becomes activated after the activation of $e_{{i_j}-1}$,   the vertices  $x_j$ and  $y_j$ are  incident with at least $r$ edges in $A\cup \{e_0,e_1,\dots,e_{{i_j}-1}\}$.  Hence the number of edges in $\{e_{i_j},\ldots,e_{t-1}\}$ with one end point in $\{x_j,y_j\}$  is at most  $2n-3-r$. It follows that $t\leqslant k(2n-3-r)$. On the other hand, $t\geqslant {n  \choose 2}-\lfloor {(r+2)^2}/{8}\rfloor$. An easy calculation shows that $k> \lfloor r/4\rfloor$.

By the definition of   ${f}$, $x_j$ (similarly $y_j$) is incident with at most $2j$ edges of $\{e_0,e_1,\dots,e_{{i_j}-1}\}$. Hence,   the set of edges in $A$ incident with either $x_j$ or $y_j$, say $E_j$, is of the size at least  $r-4j$.  Since the end points of all edges in $f$ are distinct, the sets $E_j$ are pairwise disjoint and therefore
$$|A| \geqslant \sum_{j=0}^{k-1} \big|E_j\big| \geqslant \sum_{j=0}^{\lfloor r/4\rfloor}r-4j= \Big \lfloor \frac{(r+2)^2}{8}\Big\rfloor,$$
as desired.
\end{proof}

Since the upper and lower bounds on  $m(L(K_n),r)$ coincide, we have the following result.
\begin{theorem}\label{line}
	Let $n,r$ be two positive integers. Then
	$$m\big (L(K_n),r\big )=\left\{\begin{array}{ll}
	\mathlarger{\Big\lfloor{ \frac{(r+2)^2}{8}}\Big\rfloor},  &   \mbox{ $n\geqslant \lceil{\frac{r}{2}\rceil}+2$} ;\\
		\vspace{-1mm}\\
		\mathlarger{\binom{n}{2}}, &  \mbox{ o.w.}
		\end{array}\right.$$
\end{theorem}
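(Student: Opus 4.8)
The plan is to assemble the two matching estimates already in hand and to dispose of the degenerate regime by a direct degree count, so I would split the argument according to whether $n\geqslant\lceil r/2\rceil+2$ or $n\leqslant\lceil r/2\rceil+1$. First I would treat the degenerate case $n\leqslant\lceil r/2\rceil+1$, relying on the fact (noted just before Lemma \ref{upperline}) that $L(K_n)$ is $(2n-4)$-regular. Under this hypothesis every vertex has exactly $2n-4\leqslant 2\lceil r/2\rceil-2\leqslant r-1$ neighbours, which is strictly less than $r$; the middle inequality is checked by treating the two parities of $r$ separately. Consequently no inactive vertex can ever acquire $r$ active neighbours, the process halts immediately, and the only percolating set is the entire vertex set $V(L(K_n))=E(K_n)$. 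This yields $m(L(K_n),r)=\binom{n}{2}$, which is the second line of the claimed formula.

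In the main regime $n\geqslant\lceil r/2\rceil+2$ there is essentially nothing left to do: Lemma \ref{upperline} furnishes the upper bound $m(L(K_n),r)\leqslant\lfloor(r+2)^2/8\rfloor$ and Lemma \ref{lowerline} the matching lower bound $m(L(K_n),r)\geqslant\lfloor(r+2)^2/8\rfloor$. Since the two bounds are literally the same quantity, they pin down $m(L(K_n),r)=\lfloor(r+2)^2/8\rfloor$, the first line of the formula. The only care needed at this level is to confirm that the two cases are exhaustive and to keep consistent the translation between the $r$-neighbor process on $L(K_n)$ and the edge percolation process on $K_n$ under which the lemmas were stated.

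Accordingly I do not expect any genuine obstacle at the theorem level; the real content lies in the two lemmas, and the delicate work has already been carried out there. On the upper side this was the construction of the explicit graph $G^n_r$ of size exactly $\lfloor(r+2)^2/8\rfloor$ together with the induction on $r$ that peels off the vertex $n-1$ and reduces to $G^{n-1}_{r-2}$. On the lower side it was the extraction of a maximal pairwise-independent subsequence $f$ of the activation order, the counting inequality $t\leqslant k(2n-3-r)$ forcing $k>\lfloor r/4\rfloor$, and the observation that the edge sets $E_j\subseteq A$ incident to the endpoints of $f$ are pairwise disjoint with $|E_j|\geqslant r-4j$, summing to $\lfloor(r+2)^2/8\rfloor$. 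Once these are granted, assembling Theorem \ref{line} from the case split is immediate.
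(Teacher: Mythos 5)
Your proposal is correct and follows essentially the same route as the paper: the degenerate case $n\leqslant\lceil r/2\rceil+1$ is dispatched by the same $(2n-4)$-regularity observation (which the paper makes in the text just before defining $G^n_r$), and the main case is the immediate combination of Lemmas \ref{upperline} and \ref{lowerline}. Nothing further is needed.
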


\section{Concluding remarks}

For $n \geqslant r+1$, as we have seen, $m(K_n^d,r)$ is independent of $n$.  For $n\leqslant r$, it seems that $m(K_n^d,r)$ depends on $n$ and so  in this case it would probably be much harder to derive a formula for $m(K_n^d,r)$. The special case $n=2$ has been asymptotically determined in \cite{ham,mor}.
It is easily checked that $m(K_n^d,1)=1$ and $m(K_n^d,2)=\lceil r/2 \rceil+1$. Using the result $m(K_2^d,3)=\lceil d(d+3)/6\rceil + 1$ of \cite{mor}, one may show that $m(K_n^d,3)\leqslant \lceil (d+1)(d+5)/6\rceil + 1$. On the other hand, by Theorem \ref{lbknd}, $m(K_n^d,3)\geqslant \lceil d(d+5)/6\rceil + 1$. It would be challenging to find $m(K_n^d,3)$ for $n\geqslant 3$. Another  interesting problem is the  determination of  $m_e(L(K_n),r)$ using the polynomial method.

\section*{Acknowledgments}
The second author  is supported in part by  the  Institute for Research in Fundamental Sciences\,(IPM) in 2018/2019.  This paper was written   while he  was visiting  IPM in February  2019.  He
wishes to express his gratitude for the hospitality and support he received from  IPM.

\end{document}